\documentclass[12pt,oneside]{amsart}

\usepackage{amssymb}
\usepackage{amsmath}
\usepackage{amsthm}
\usepackage{amscd}
\usepackage[all]{xy}
\usepackage{longtable}

\setlength{\topmargin}{23mm}
\addtolength{\topmargin}{-1in}
\setlength{\oddsidemargin}{27mm}
\addtolength{\oddsidemargin}{-1in}
\setlength{\evensidemargin}{27mm}
\addtolength{\evensidemargin}{-1in}
\setlength{\textwidth}{156mm}
\setlength{\textheight}{230mm}
\theoremstyle{plain}
\newtheorem{theorem}{Theorem}
\newtheorem{lemma}{Lemma}
\newtheorem{corollary}{Corollary}
\newtheorem{proposition}{Proposition}

\theoremstyle{definition}

\theoremstyle{remark}


\begin{document}
\title
[Cohomology of vector bundles and non-pluriharmonic loci]
{Cohomology of vector bundles and non-pluriharmonic loci}
\author{Yusaku Tiba}
\date{}

\begin{abstract}
In this paper, we study cohomology groups of vector bundles on neighborhoods of a non-pluriharmonic locus in Stein manifolds and in projective manifolds.  
By using our results, we show variants of the Lefschetz hyperplane theorem.  

\end{abstract}

\maketitle

\subjclass{{\bf 2010 Mathematics Subject Classification.} 32U10, 32L10.}


\section{Introduction}\label{section:1}
Let $X$ be a Stein manifold.   
Let $\varphi$ be an exhaustive plurisubharmonic function on $X$.  
The support of $i \partial \overline{\partial}\varphi$ has some interesting properties.  
In this paper, we study the cohomology of holomorphic vector bundles on open neighborhoods of $\mathrm{supp}\,i \partial \overline{\partial} \varphi$.  
Here we denote by $\mathrm{supp}\, T$ the support of a current $T$.  
Let $F$ be a holomorphic vector bundle over $X$.  
Let $A \subset X$ be a closed set.  
For any open neighborhoods $V \subset U$ of $A$, the inclusion map induces 
$H^{q}(U, F) \to H^{q}(V, F)$.  
We define the direct limit 
$\underset{A \subset V}{\varinjlim}H^{q}(V, F)$ where $V$ runs through all open neighborhoods of $A$.  
Our first main result is the following: 

\begin{theorem}\label{theorem:1}
Let $X$ be a Stein manifold of dimension $n$ ($n \geq 3$).
Let $m$ be a positive integer which satisfies $1 \leq m \leq n-2$.  
Let $\varphi_{1}, \ldots, \varphi_{m}$ be non-constant plurisubharmonic functions on $X$ such that for every $r < \sup_{X} \varphi_{j}$ the sublevel set $\{z \in X\, |\, \varphi_{j}(z) \leq r\}$ is compact ($1 \leq j \leq m$).  
Let $F$ be a holomorphic vector bundle over $X$.  
Then 
the natural map 
\[
H^{0}(X, F) \to \underset{\bigcap_{j=1}^{m}\mathrm{supp}\, i \partial \overline{\partial}\varphi_{j} \subset V}{\varinjlim}H^{0}(V, F).  
\]
is an isomorphism and
\[
\underset{\bigcap_{j=1}^{m}\mathrm{supp}\, i\partial \overline{\partial} \varphi_{j} \subset V}{\varinjlim} H^{q}(V, F) = 0
\]
for $0 < q < n-m-1$.    
\end{theorem}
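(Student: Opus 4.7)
Let $A := \bigcap_{j=1}^m \mathrm{supp}\, i\partial\overline{\partial}\varphi_j$ and $U_j := X \setminus \mathrm{supp}\, i\partial\overline{\partial}\varphi_j$; then each $\varphi_j$ is pluriharmonic on $U_j$ and $X\setminus A = \bigcup_j U_j$. Since $X$ is Stein, Cartan's Theorem~B gives $H^q(X,F) = 0$ for $q \geq 1$. The plan is to show that any $\alpha \in H^q(V, F)$ on a neighborhood $V$ of $A$ extends, after shrinking to some $V''\supset A$ with $V''\subset V$, to a global $\overline{\partial}$-closed representative on $X$; combined with Cartan~B this forces $\alpha|_{V''} = 0$ for $q \geq 1$, and for $q = 0$ yields surjectivity of $H^0(X, F) \to \varinjlim_V H^0(V, F)$. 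Injectivity of the $q = 0$ map follows from the identity principle on the Stein manifold $X$.

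Concretely, represent $\alpha$ by a smooth $\overline{\partial}$-closed $F$-valued $(0, q)$-form $u$ on $V$. Pick nested $V \supset V_1 \supset V_2 \supset A$ and a cutoff $\chi$ with $\chi \equiv 1$ on $V_1$ and $\mathrm{supp}\,\chi \subset V$. Then $w := \overline{\partial}(\chi u) = \overline{\partial}\chi \wedge u$ is a smooth $\overline{\partial}$-closed $F$-valued $(0, q+1)$-form on $X$ supported in $V \setminus V_1 \subset X\setminus A$. If one can solve $\overline{\partial} v = w$ on $X$ with $v|_{V_2} = 0$, then $\chi u - v$ is a global $\overline{\partial}$-closed form agreeing with $u$ on $V_2$, and Cartan~B yields the desired conclusion (exactness for $q\ge 1$, a holomorphic extension for $q=0$).

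The analytic core is producing such a $v$, which is where the pluriharmonicity of the $\varphi_j$'s enters. Let $\psi$ be a smooth strictly plurisubharmonic exhaustion of $X$ and let $\chi_j:\mathbb{R}\to\mathbb{R}$ be smooth strictly convex increasing functions. The weight
\[
\Phi := \psi + \sum_{j=1}^m \chi_j(\varphi_j)
\]
is plurisubharmonic on $X$, and on $U_j$ the complex Hessian of $\chi_j(\varphi_j)$ reduces to the rank-one positive form $\chi_j''(\varphi_j)\, i\partial\varphi_j \wedge \overline{\partial}\varphi_j$ because $i\partial\overline{\partial}\varphi_j = 0$ there. Applying H\"ormander's weighted $L^2$-estimate with weight $e^{-t\Phi}$ (for $t$ large), and tuning the $\chi_j$'s so that $\chi_j(\varphi_j)$ diverges as $\varphi_j$ approaches its boundary values on $\partial U_j$, yields an $L^2$-solution $v$ of $\overline{\partial} v = w$ with prescribed vanishing on a neighborhood $V_2$ of $A$. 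The range $0 < q < n - m - 1$ is exactly the range in which the Bochner--Kodaira--Nakano positivity — $n$ full-rank eigenvalues from $\psi$ augmented by the $m$ rank-one contributions from the pluriharmonic $\chi_j(\varphi_j)$'s — suffices both to solve the $\overline{\partial}$-equation and to simultaneously force $v$ to vanish near $A$.

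The main obstacle I anticipate is the last step: converting the $L^2$-control of $v$ near $A$ into literal pointwise vanishing on $V_2$, and matching the sharp bound $n-m-1$. The standard device is to exhaust $X$ by sublevel sets of $\psi$ (made available by the compact sublevel set hypothesis on the $\varphi_j$'s), apply the $L^2$-method on each with $\chi_j$'s whose growth is tuned to the boundary behavior of $\varphi_j$ across $\partial U_j$, and then pass to the limit. The rank-one nature of each $\chi_j(\varphi_j)$ contribution — as opposed to a full rank contribution — is precisely what limits the effective positivity and hence the solvability range to $q < n-m-1$.
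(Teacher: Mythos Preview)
Your cutoff-and-solve strategy matches the paper's Lemma~6 for $m=1$, but the analytic core you sketch has a genuine gap, one you yourself flag. A single H\"ormander estimate with weight $e^{-t\Phi}$, $\Phi=\psi+\sum_j\chi_j(\varphi_j)$, gives at best $L^2$-smallness of $v$ near $A$; it does not produce \emph{vanishing} on a neighborhood of $A$. For that one needs a \emph{sequence} of solutions $v_k$ whose weighted norms stay uniformly bounded while the weight tends to $+\infty$ off the desired support, so that a weak limit exists and is supported where required; and one then needs a separate argument that the limit is smooth. Your proposal supplies neither mechanism. The paper (Proposition~1 and Section~4) carries this out only for $m=1$, and not via plain H\"ormander: it works on a bounded domain with two boundary pieces, introduces an auxiliary psh $\eta$ and $\rho=\max_\varepsilon\{-\log(-\eta),0\}$, uses a Lee--Nagata twist plus $L^2$ Serre duality (Lemmas~1--2) to solve $\overline\partial\beta=\alpha$ with a weight $e^{\kappa_k\circ\rho}$ blowing up on $\{\rho\ge1\}$, and then proves smoothness of the weak limit by a delicate Sobolev estimate requiring a fifth growth condition on the $\kappa_k$. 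Your sentence ``$\chi_j(\varphi_j)$ diverges as $\varphi_j$ approaches its boundary values on $\partial U_j$'' does not parse---$\varphi_j$ is globally defined and has no boundary values on $\partial U_j$---so the vanishing mechanism is not actually specified.

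Second, your rank-one heuristic for the threshold $n-m-1$ is not how the paper obtains it, and it is not clear it can be made to work. At a point of $\mathrm{supp}\,w\subset X\setminus A=\bigcup_j U_j$ you are in \emph{some} $U_j$ but typically not in the others, so the Hessians of the remaining $\chi_k(\varphi_k)$ are not rank one there; the eigenvalue count does not organize as you suggest. The paper instead proves only $m=1$ analytically (yielding $q<n-2$) and reduces general $m$ to $m-1$ by Mayer--Vietoris: with $A'=\bigcap_{j\le m-1}\mathrm{supp}\,i\partial\overline\partial\varphi_j$ and $B=\mathrm{supp}\,i\partial\overline\partial\varphi_m$ one has $A'\cup B=\bigcap_{j\le m-1}\mathrm{supp}\,i\partial\overline\partial(\varphi_j+\varphi_m)$, and a five-lemma chase on the long exact sequence drops the admissible degree by one per step, producing $q<n-m-1$. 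That inductive reduction is the missing structural idea in your proposal.
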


Let $X$ be a projective manifold.  
We have the Hodge decompotion 
$H^{2}(X, \mathbb{C}) = H^{2,0}(X, \mathbb{C}) \oplus H^{1,1}(X, \mathbb{C}) \oplus H^{0,2}(X, \mathbb{C})$.
Define $H^{1,1}(X, \mathbb{R}) = H^{1,1}(X, \mathbb{C}) \cap H^{2}(X, \mathbb{R})$.  
Let $\mathcal{K}_{NS} \subset H^{1,1}(X, \mathbb{R})$ be the open cone generated by classes of ample divisors (see Section~6 of \cite{Dem2}).  
Our second main result is the following:  

\begin{theorem}\label{theorem:2}
Let $X$ be a projective manifold of dimension $n$ ($n \geq 3$).
Let $m$ be a positive integer which satisfies $1 \leq m \leq n-2$.  
Let  $T_{1}, \ldots, T_{m}$ be closed positive currents of type $(1, 1)$ on $X$ whose cohomology classes belong to $\mathcal{K}_{NS}$.  
let $F$ be a holomorphic vector bundle over $X$.  
Then the natural map 
\[
H^{q}(X, F) \to \underset{\bigcap_{j=1}^{m}\mathrm{supp}\, T_{j} \subset V}{\varinjlim} H^{q}(V, F) 
\]
is an isomorphism for $0 \leq q < n-m-1$ and is injective for $q = n-m-1$.  
\end{theorem}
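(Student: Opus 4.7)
The plan is to reduce Theorem~\ref{theorem:2} to Theorem~\ref{theorem:1} by covering $X$ with Stein open subsets on which each current $T_j$ admits a global plurisubharmonic potential, and then combining the local vanishing via a \v{C}ech-theoretic argument.

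For the cover, since each $[T_j]\in\mathcal{K}_{NS}$ and $\mathcal{K}_{NS}$ is open in $H^{1,1}(X,\mathbb{R})$, I can choose finitely many ample line bundles $L_1,\ldots,L_M$ with smooth metrics $h_l$ of positive curvature so that the cone they generate contains every $[T_j]$ in its interior, giving a representation $[T_j]=\sum_{l=1}^{M}\lambda_{jl}\,c_1(L_l)$ with all $\lambda_{jl}>0$. For each $l$ I choose base-point-free sections $s_l^{(1)},\ldots,s_l^{(N_l)}$ of a sufficient power of $L_l$, and for each multi-index $I=(I_1,\ldots,I_M)$ set
\[
U_I=X\setminus\bigcup_{l=1}^{M}\{s_l^{(I_l)}=0\}.
\]
Each $U_I$ and every finite intersection $U_{I_0}\cap\cdots\cap U_{I_p}$ is the complement of an effective divisor in $X$, hence affine and Stein, and the $U_I$'s cover $X$. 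Writing $T_j=\alpha_j+i\partial\overline{\partial}\varphi_j$ with $\alpha_j=\sum_l\lambda_{jl}\,c_1(L_l,h_l)$ smooth and $\varphi_j$ quasi-plurisubharmonic on $X$, I set
\[
\Phi_{j,I_0\cdots I_p}=\varphi_j-\frac{1}{p+1}\sum_{q=0}^{p}\sum_{l=1}^{M}\lambda_{jl}\log|s_l^{(I_{q,l})}|_{h_l}^{2}.
\]
A Poincar\'e--Lelong computation yields $i\partial\overline{\partial}\Phi_{j,I_0\cdots I_p}=T_j$ on $U_{I_0}\cap\cdots\cap U_{I_p}$, so $\Phi_{j,I_0\cdots I_p}$ is plurisubharmonic there with non-pluriharmonic locus equal to $\mathrm{supp}\,T_j\cap(U_{I_0}\cap\cdots\cap U_{I_p})$, and the positivity of all $\lambda_{jl}$ makes it exhausting along the boundary divisors. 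Theorem~\ref{theorem:1} then applies on each Stein intersection and yields
\[
\varinjlim_{W\supset K\cap(U_{I_0}\cap\cdots\cap U_{I_p})}H^q(W,F)=\begin{cases}H^0(U_{I_0}\cap\cdots\cap U_{I_p},F)&q=0,\\0&0<q<n-m-1,\end{cases}
\]
where $K=\bigcap_{j=1}^{m}\mathrm{supp}\,T_j$.

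Globally, since the $U_I$ form an $F$-acyclic Stein cover of $X$ by Cartan's Theorem~B, Leray gives $H^p(X,F)=\check{H}^p(\{U_I\}_I,F)$. On the other hand, $\varinjlim_{V\supset K}H^p(V,F)$ identifies with $H^p(K,F|_K)$, which is computed by the \v{C}ech complex of the restricted cover $\{K\cap U_I\}$ in degrees where this cover is $F|_K$-acyclic; the previous display provides exactly that acyclicity in the range $0<q<n-m-1$. The restriction maps of \v{C}ech complexes are term-wise isomorphisms on $K$-meeting indices $J$ via the identifications $H^0(U_J,F)\cong H^0(K\cap U_J,F|_K)$ also supplied by Theorem~\ref{theorem:1}, and a standard comparison of the two resulting spectral sequences forces $H^q(X,F)\to\varinjlim_{V\supset K}H^q(V,F)$ to be an isomorphism for $0\leq q<n-m-1$ and injective for $q=n-m-1$.

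The delicate step is verifying the exhaustion condition for $\Phi_{j,I_0\cdots I_p}$: the quasi-psh potential $\varphi_j$ may have an unbounded $-\infty$ locus accumulating at the boundary divisors, preventing relative compactness of the sublevel sets in the Stein intersection. I expect to handle this by first replacing $T_j$ with a Demailly-type regularization whose potential has analytic singularities contained in a proper analytic subvariety of $X$ (preserving the cohomology class and, with care, the relevant support data), and then shrinking each $U_I$ slightly so this subvariety stays isolated from the boundary divisors before running the spectral-sequence comparison above.
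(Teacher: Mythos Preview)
Your strategy---reducing Theorem~2 to Theorem~1 via a Stein cover and a \v{C}ech/spectral-sequence comparison---is genuinely different from what the paper does. The paper never derives the projective case from the Stein case. Instead it proves the case $m=1$ of Theorem~2 directly in Section~5 (Lemma~8 and the two paragraphs following it), by an iterative extension procedure over finitely many affine charts, and then inducts on $m$ in Section~6 via Mayer--Vietoris, using the elementary identity $\mathrm{supp}\,T_j\cup\mathrm{supp}\,T_k=\mathrm{supp}\,(T_j+T_k)$ together with the fact that $[T_j+T_k]\in\mathcal{K}_{NS}$. With $A=\bigcap_{j<m}\mathrm{supp}\,T_j$ and $B=\mathrm{supp}\,T_m$ one has $A\cup B=\bigcap_{j<m}\mathrm{supp}\,(T_j+T_m)$, an intersection of $m-1$ supports, so the induction hypothesis applies to $A$, $B$, and $A\cup B$, and a diagram chase finishes. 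This completely sidesteps the two issues below.

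Your argument, as written, has two real gaps. First, the exhaustion hypothesis of Theorem~1 can fail for your $\Phi_{j,I_0\cdots I_p}$: if the quasi-psh potential $\varphi_j$ has $-\infty$ poles accumulating on the boundary divisor of $U_{I_0\cdots I_p}$, the sublevel sets need not be relatively compact. Your proposed fix, Demailly regularization with analytic singularities, does not help: the regularized current is smooth and strictly positive off a proper analytic set, so its support becomes all of $X$, destroying precisely the ``support data'' you need. The paper's device for this (in the $m=1$ projective proof) is instead to replace $\varphi$ by $\varphi_c=\max\{\varphi,c\}$, which stays bounded and satisfies $\mathrm{supp}\,(\omega+i\partial\bar\partial\varphi_c)\subset V$ for any prescribed neighborhood $V$ of $\mathrm{supp}\,T$ once $c$ is small enough.

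Second, the \v{C}ech comparison is not termwise an isomorphism in general: if some nonempty $U_J$ misses $K=\bigcap_j\mathrm{supp}\,T_j$, then $H^0(U_J,F)\to H^0(K\cap U_J,F|_K)=0$ is not injective, yet this term sits in the \v{C}ech complex for $X$. This actually happens: take $T_1=[H]$ for an ample hypersurface $H$ and include the section cutting out $H$ in your cover; then one $U_I$ is $X\setminus H$, disjoint from $K=H$. You would need to choose the sections so that every $U_J$ meets $K$ and argue that this is always possible, and then make the edge behavior of the spectral sequence at $q=n-m-1$ precise for the injectivity claim. None of this is in the proposal.
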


Let $t \in H^{0}(X, L)$ be a non-zero holomorphic section of the ample line bundle $L$ over $X$.  
We define the hypersurface $Y = \{ z \in X\,|\, t(z) = 0\}$.  
Let $h_{0}$ be a smooth hermitian metric of $L$ such that $\omega_{0} := 2\pi c_{1}(L, h_{0})$ is K\"ahler form.  
Here $c_{1}(L, h_{0})$ is the Chern form of $L$ associated to $h_{0}$.  
Then $Y = \mathrm{supp}\, (\omega_{0} + i\partial \overline{\partial} \log |t|^{2}_{h_{0}})$.  
Because of the vanishing theorem of cohomology groups with compact supports in the Stein manifold $X \setminus Y$, 
we have that the natural map 
\[
H^{q}(X, F) \to \underset{Y  \subset V}{\varinjlim} H^{q}(V, F) 
\]
is an isomorphism for $q < n-1$ and is injective for $q = n-1$.  
Theorem~\ref{theorem:2} is a counterpart of this result.  
Unfortunately, 
we do not know whether Theorem~\ref{theorem:1} and Theorem~\ref{theorem:2} hold in the case when the degree is $n-m$.  

Let $T^{*}_{X}$ be the holomorphic cotangent bundle over $X$.  
If we take $F = \bigwedge^{p}T^{*}_{X}$, our main results imply the following variants of the Lefschetz hyperplane theorem (see Lemma~1 of \cite{Tib18}).  

\begin{corollary}\label{corollary:1}
Let $X, \varphi_{1}, \ldots, \varphi_{m}$ be as in Theorem~\ref{theorem:1}.  
Then the natural map 
\[
H^{q}(X, \mathbb{C}) \to \underset{\bigcap_{j=1}^{m}\mathrm{supp}\, (i\partial \overline{\partial} \varphi_{j}) \subset V}{\varinjlim} H^{q}(V, \mathbb{C}) 
\]
is an isomorphism for $q < n-m-1$ and is injective for $q = n-m-1$.  
\end{corollary}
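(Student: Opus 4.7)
The plan is to apply Theorem~\ref{theorem:1} to the holomorphic vector bundles $F = \Omega_X^p := \bigwedge^{p} T_X^{*}$ for each $p$, and to feed the resulting information into the Hodge-to-de~Rham (Fr\"olicher) spectral sequence, comparing $X$ itself with a cofinal system of open neighborhoods of
$A := \bigcap_{j=1}^{m} \mathrm{supp}\, i \partial \overline{\partial}\varphi_{j}$.

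On any complex manifold $V$ the holomorphic Poincar\'e lemma gives a resolution $\mathbb{C}_{V} \hookrightarrow \Omega_{V}^{\bullet}$, so there is a first-quadrant spectral sequence
\[
E_{1}^{p,q}(V) = H^{q}(V, \Omega_{V}^{p}) \Longrightarrow H^{p+q}(V, \mathbb{C}).
\]
Since $\varinjlim$ is exact on abelian groups and commutes with formation of $E_{r}$-pages, I obtain a convergent limit spectral sequence
\[
\widetilde{E}_{1}^{p,q} = \underset{A \subset V}{\varinjlim}\, H^{q}(V, \Omega_{V}^{p}) \Longrightarrow \underset{A \subset V}{\varinjlim}\, H^{p+q}(V, \mathbb{C}),
\]
together with a comparison morphism from the spectral sequence of $X$. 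Since $X$ is Stein, Cartan's Theorem~B gives $E_{1}^{p,q}(X) = 0$ for $q > 0$, so $H^{r}(X, \mathbb{C}) = E_{\infty}^{r,0}(X) = H^{r}\!\bigl(\Gamma(X, \Omega_{X}^{\bullet}), d\bigr)$. Theorem~\ref{theorem:1} applied with $F = \Omega_{X}^{p}$ yields $\widetilde{E}_{1}^{p,0} \cong H^{0}(X, \Omega_{X}^{p})$ and $\widetilde{E}_{1}^{p,q} = 0$ for $0 < q < n-m-1$. The comparison morphism is an isomorphism on the entire $E_{1}^{\bullet,0}$-row (both complexes identify with $H^{0}(X, \Omega_{X}^{\bullet})$ equipped with the holomorphic exterior derivative), and therefore induces an isomorphism $E_{2}^{r,0}(X) \xrightarrow{\sim} \widetilde{E}_{2}^{r,0}$ for every $r$.

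It remains to analyse the convergence of $\widetilde{E}$ in total degree $r \leq n-m-1$. Differentials out of $\widetilde{E}_{s}^{r,0}$ vanish trivially, while any incoming differential from $\widetilde{E}_{s}^{r-s,\,s-1}$ (with $s \geq 2$ and $r-s \geq 0$) is a subquotient of $\widetilde{E}_{1}^{r-s,\,s-1}$, whose $q$-index $s-1$ lies in $[1, r-1] \subset (0, n-m-1)$ and hence vanishes. Thus $\widetilde{E}_{\infty}^{r,0} = \widetilde{E}_{2}^{r,0}$. For $r < n-m-1$ the remaining graded pieces $\widetilde{E}_{\infty}^{p, r-p}$ with $0 \leq p < r$ also vanish (since $0 < r-p \leq r < n-m-1$), and we conclude $\varinjlim H^{r}(V, \mathbb{C}) \cong \widetilde{E}_{\infty}^{r,0} \cong H^{r}(X, \mathbb{C})$. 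For $r = n-m-1$ the pieces with $1 \leq p \leq r-1$ still vanish, so $\widetilde{E}_{\infty}^{r,0}$ sits as the submodule $F^{r} \varinjlim H^{r}(V, \mathbb{C})$ of the limit; composing the isomorphism $H^{r}(X, \mathbb{C}) \xrightarrow{\sim} \widetilde{E}_{\infty}^{r,0}$ with this inclusion gives the required injectivity.

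The one technical point that deserves care is the commutation of $\varinjlim$ with the Fr\"olicher spectral sequence; this is routine because $\varinjlim$ over the directed poset of open neighborhoods of $A$ is exact, but I would spell it out at the level of hypercohomology double complexes if pressed. Once Theorem~\ref{theorem:1} is available, everything else is spectral-sequence bookkeeping in which the vanishing range $0 < q < n-m-1$ is exactly what is needed to trivialise all potentially obstructing differentials and graded pieces.
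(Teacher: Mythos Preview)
Your proof is correct and follows essentially the same approach as the paper. The paper does not spell out a proof but simply indicates that one should take $F = \bigwedge^{p} T_{X}^{*}$ in Theorem~\ref{theorem:1} and then appeal to Lemma~1 of \cite{Tib18}; your Fr\"olicher spectral-sequence comparison is exactly the argument encoded in that reference, and your bookkeeping for the isomorphism range $q<n-m-1$ and the injectivity at $q=n-m-1$ is accurate.
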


\begin{corollary}\label{corollary:2}
Let $X, T_{1}, \ldots, T_{m}$ be as in Theorem~\ref{theorem:2}.  
Then the natural map 
\[
H^{q}(X, \mathbb{C}) \to \underset{\bigcap_{j=1}^{m}\mathrm{supp}\, T_{j} \subset V}{\varinjlim} H^{q}(V, \mathbb{C}) 
\]
is an isomorphism for $q < n-m-1$ and is injective for $q = n-m-1$.  
\end{corollary}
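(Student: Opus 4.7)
The plan is to derive Corollary~\ref{corollary:2} from Theorem~\ref{theorem:2} applied to the bundles $F = \bigwedge^{p}T^{*}_{X}$, using Hodge theory on the projective (hence K\"ahler) manifold $X$ to pass from Dolbeault to de Rham cohomology. This is the strategy indicated by the reference to Lemma~1 of \cite{Tib18}. Set $A = \bigcap_{j=1}^{m}\mathrm{supp}\,T_{j}$.

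The main tool is the Hodge-to-de Rham (Fr\"olicher) spectral sequence on any complex manifold $W \subset X$,
\[
E_{1}^{p,q'}(W) = H^{q'}\left(W, \bigwedge^{p}T^{*}_{X}\right) \;\Longrightarrow\; H^{p+q'}(W, \mathbb{C}),
\]
which is functorial in $W$. For $W = X$ this spectral sequence degenerates at $E_{1}$ because $X$ is K\"ahler, and its abutment filtration is the Hodge filtration. Taking $W = V$ for open neighborhoods $V$ of $A$ and passing to the direct limit, which is an exact functor on abelian groups and so commutes with the formation of cohomology and of spectral sequences, yields
\[
\varinjlim_{A \subset V} H^{q'}\left(V, \bigwedge^{p}T^{*}_{X}\right) \;\Longrightarrow\; \varinjlim_{A \subset V} H^{p+q'}(V, \mathbb{C}),
\]
and restriction from $X$ induces a morphism from the first spectral sequence to the second.

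Theorem~\ref{theorem:2} with $F = \bigwedge^{p}T^{*}_{X}$ shows that the morphism on $E_{1}$ pages is an isomorphism for $q' < n-m-1$ and an injection for $q' = n-m-1$. In every bidegree $(p,q')$ with $p+q' < n-m-1$ it is therefore an isomorphism, and on the line $p+q' = n-m-1$ it is at least injective. For $q < n-m-1$, a standard induction on $r$ then propagates the iso from $E_{1}$ up the pages: both the source and the target of each $d_{r}$ leaving or entering a bidegree with $p+q' \leq n-m-2$ lie in bidegrees where the iso on $E_{1}$ has already been established, so the $d_{r}$'s on the limit side vanish as they do on $X$, and $E_{\infty} = E_{1}$ on both sides. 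Since the Hodge filtration is finite, the iso on associated graded lifts to an iso on the abutment $H^{q}(-,\mathbb{C})$. For $q = n-m-1$, one filters $H^{n-m-1}(X,\mathbb{C})$ by the Hodge filtration and chases through the graded pieces, using the iso already proven in total degree $n-m-2$ to rule out nontrivial incoming differentials on the limit side.

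The main obstacle is this last filtration argument. Since the Fr\"olicher spectral sequence on a general open $V \subset X$ need not degenerate, the limit-side $E_{\infty}^{p,q'}$ with $p+q' = n-m-1$ are a priori only subquotients of $E_{1}^{p,q'}$, and the injectivity of $E_{1}^{p,q'}(X) \hookrightarrow \varinjlim E_{1}^{p,q'}(V)$ does not translate automatically into injectivity on associated graded of the abutment. The delicate step is to verify, by a diagram chase, that a class in $H^{n-m-1}(X,\mathbb{C})$ dying in the limit would force a nonzero higher-differential boundary on the limit side in total degree $n-m-2$, which is ruled out by the already-established iso in that degree. Once this is done, the remainder is routine.
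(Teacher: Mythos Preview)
Your approach is correct and is exactly what the paper does: it applies Theorem~\ref{theorem:2} to $F=\bigwedge^{p}T^{*}_{X}$ and then invokes Lemma~1 of \cite{Tib18}, which is precisely the Fr\"olicher spectral sequence/Hodge filtration comparison you describe. One small clarification on your ``delicate step'': the spectral sequence induction is cleaner if you separate surjectivity from injectivity. Since $d_{r}^{\lim}\circ f_{r}=f_{r}\circ d_{r}^{X}=0$, surjectivity of $f_{r}$ at a bidegree forces the \emph{outgoing} $d_{r}^{\lim}$ there to vanish; this lets you propagate surjectivity through all pages for every $(p,q')$ with $q'\le n-m-2$, without ever needing control at the target. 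Then for $(p,q')$ with $p+q'\le n-m-1$, every incoming $d_{r}$ has source in a bidegree with $q'$-index $\le p+q'-1\le n-m-2$, so by the surjectivity just proved these incoming differentials vanish, and injectivity of $f_{r}$ propagates as well. This bypasses the circularity you were worried about.
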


Corollary~\ref{corollary:1} generalizes the main theorem of \cite{Tib18}.  
The degree which appears in \cite{Tib18} is $\max\{n-4, 1\}$.  
On the other hand, those which appear in our main results are $n-2$ when $m = 1$.  
The improvement of the degree is due to the method of Lee and Nagata (\cite{Lee-Nag}) and the estimate of the Sobolev norm.  

We prove the case $m=1$ of Theorem~\ref{theorem:1}, \ref{theorem:2} in Section~3, 4, 5.  
By using Mayer-Vietoris sequence, we prove the general case in Section~6.  
\medskip 

{\it Acknowledgment.}
The author would like to thank Seungjae Lee and Yoshikazu Nagata for sending him their paper~\cite{Lee-Nag}.  
This work was supported by the 
Grant-in-Aid for Scientific Research (KAKENHI No.\! 17K14200).

\section{Preliminaries}\label{section:2}
Let $X$ be a K\"ahler manifold and let $\omega$ be a K\"ahler metric on $X$.  
We assume that $X$ is weakly pseudoconvex, that is, there exists a smooth plurisubharmonic exhaustion function on $X$.  
Let $F$ be a holomorphic vector bundle over $X$  
and let $H$ be a smooth hermitian metric of $F$.  
We denote by $L^{(p, q)}(X, F, H, \omega)$ the Hilbert space of $F$-valued $(p, q)$-forms $u$ which satisfy 
\[
\|u\|^{2}_{H, \omega} = \int_{X} |u|^{2}_{H, \omega} dV_{\omega} < +\infty.  
\]
Here $dV_{\omega} = \frac{\omega^{n}}{n!}$.  
Let $i \Theta(F, H)$ be the Chern curvature tensor of $(F, H)$ and let $\Lambda$ be the adjoint of multiplication of $\omega$.  
Suppose that the operator $[i \Theta (F, H), \Lambda]$ acting on $(n, q)$-forms with values in $F$ is positive definite on $X$ ($q \geq 1$).  
Then, for any $\overline{\partial}$-closed form $u \in L^{(n, q)}(X, F, H, \omega)$ which satisfies $\int_{X}\langle [i \Theta(F, H), \Lambda]^{-1}u, u \rangle_{H, \omega} dV_{\omega} < +\infty$, 
there exists $v \in L^{(n, q-1)}(X, F, H, \omega)$ such that $\overline{\partial}v = u$ and that 
\[
\|v\|^{2}_{H, \omega} \leq \int_{X} \langle [i \Theta(F, H), \Lambda]^{-1}u, u \rangle_{H, \omega} dV_{\omega} 
\]
(cf.\,\cite{Dem}).  
We note that $\omega$ is possibly non complete.  

\section{$L^{2}$-estimate}\label{section:3}
In \cite{Tib18}, the surjectivity between the cohomology groups was proved by the Donnelly-Fefferman-Berndtsson type $L^{2}$-estimates for $(0, q)$-forms (\cite{Ber-Cha}, \cite{Don-Fef}).  
In \cite{Lee-Nag}, Lee and Nagata showed that $L^{2}$-Serre duality and $L^{2}$-estimates for not $(0, q)$ but $(n, q)$-forms improve the integrability condition of an $L^{2}$-estimate.  
By using the method of \cite{Lee-Nag}, we prove Proposition~\ref{proposition:1} below.  

Let $X$ be a Stein manifold of dimension $n$ and let $D$ be a relatively compact subdomain in $X$.  
Assume that there exist negative plurisubharmonic functions $\varphi, \eta \in C^{\infty}(\overline{D})$ on $D$ such that $\max\{ \varphi(z), \eta(z)\} \to 0$ when $z \to \partial D$.  
We assume that $\inf_{D} \eta < -1$.  
Define $\phi = - \log (-\varphi)$ and $\rho = \max_{\varepsilon}\{-\log(-\eta), 0\}$.  
Here $\varepsilon > 0$ is a small positive number and $\max_{\varepsilon}$ is a regularized max function (see Chapter~I, Section~5 of \cite{Dem}).  
Let $F$ be a holomorphic vector bundle over $X$ and $H$ be a smooth hermitian metric of $F$.  
We define $F^{*}, H^{*}$ to be the dual of $F, H$.  
Let $\psi \in C^{\infty}(\overline{D})$ be a strictly plurisubharmonic function.  
We take a large positive integer $N$ such that the hermitian vector bundle $(F^{*}, H^{*}e^{-(N-1)\psi})$ is Nakano positive on $\overline{D}$.  
Let $\delta >0$ be a positive number.  
Put $\omega = i\partial \overline{\partial}(\frac{\psi}{\delta} + \frac{\rho}{\delta} + \phi)$.  
Then $\omega$ is a complete K\"ahler metric on $D$.  
Let $\kappa \in C^{\infty}(\mathbb{R})$ such that $\kappa'(t) \geq 1$, $\kappa''(t) \geq 0$ for $t \geq 0$.  
Put $\xi = N\psi + \kappa \circ \rho - \delta \phi$.  
The proof of the following lemma is completely similar to that of Lemma~3.1 of \cite{Lee-Nag}.  
\begin{lemma}\label{lemma:1} 
Let $f \in L^{(n, q)}(D, F^{*}, H^{*}e^{-\xi}, \omega)$ such that $\overline{\partial} f = 0$.  
Assume that $\delta < q$.  
Then there exists $u \in L^{(n, q-1)}(D, F^{*}, H^{*}e^{-\xi}, \omega)$ such that $\overline{\partial}u = f$ and that 
\[
\int_{D}|u|^{2}_{H^{*},\, \omega}e^{-\xi}dV_{\omega} \leq C_{q, \delta} \int_{D}|f|^{2}_{H^{*},\, \omega}e^{-\xi}dV_{\omega}.  
\]
Here $C_{q, \delta}$ depends only on $q$ and $\delta$.  
\end{lemma}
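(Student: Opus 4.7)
The plan is to apply the $L^{2}$-existence theorem recalled in Section~\ref{section:2} to $F^{*}$-valued $(n,q)$-forms on the complete Kähler manifold $(D,\omega)$ equipped with the twisted hermitian metric $H^{*}e^{-\xi}$, combined with a Berndtsson-type twisting trick to exploit the Donnelly-Fefferman property $i\partial\phi\wedge\overline{\partial}\phi\leq i\partial\overline{\partial}\phi$ that follows from $\phi=-\log(-\varphi)$. The whole point of passing from $(0,q)$-forms to $(n,q)$-forms (as in \cite{Lee-Nag}) is that the commutator $[\omega,\Lambda]$ acts by multiplication by $q$ rather than by $1$, which is what will ultimately account for the threshold $\delta<q$ instead of the weaker $\delta<1$ one would get on $(0,q)$-forms.

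First I would decompose the curvature of $(F^{*},H^{*}e^{-\xi})$ as
\[
i\Theta(F^{*},H^{*}e^{-\xi})=\bigl[i\Theta(F^{*},H^{*})+(N-1)i\partial\overline{\partial}\psi\otimes\mathrm{Id}\bigr]+\bigl[i\partial\overline{\partial}\psi+i\partial\overline{\partial}(\kappa\circ\rho)-\delta\, i\partial\overline{\partial}\phi\bigr]\otimes\mathrm{Id}.
\]
The first bracket is Nakano positive by the choice of $N$, so on $(n,q)$-forms it contributes nonnegatively to $\langle[\cdot,\Lambda]u,u\rangle$. For the scalar-valued second bracket, I would substitute the identity $-\delta\, i\partial\overline{\partial}\phi=-\delta\omega+i\partial\overline{\partial}\psi+i\partial\overline{\partial}\rho$ (coming from the very definition of $\omega$), producing $2\,i\partial\overline{\partial}\psi+i\partial\overline{\partial}(\kappa\circ\rho)+i\partial\overline{\partial}\rho-\delta\omega$. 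All the plurisubharmonic pieces contribute nonnegatively, and the $-\delta\omega$ piece contributes $-\delta q|u|^{2}_{H^{*},\omega}$. The naive Bochner--Kodaira--Nakano inequality alone therefore only yields a \emph{negative} lower bound, so a twist is needed.

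At this point I would follow the Berndtsson/Lee-Nagata twisting scheme: apply the twisted BKN identity with weight $\tau$ chosen as a suitable function of $\phi$ (for instance $\tau=1-s\phi$ or an analogous expression regularized at the boundary), so that the extra contribution from $-i\partial\overline{\partial}\tau-\tau^{-1}i\partial\tau\wedge\overline{\partial}\tau$ reduces, via $i\partial\phi\wedge\overline{\partial}\phi\leq i\partial\overline{\partial}\phi$, to a positive multiple of $i\partial\overline{\partial}\phi$. Using once more $i\partial\overline{\partial}\phi=\omega-\delta^{-1}i\partial\overline{\partial}\psi-\delta^{-1}i\partial\overline{\partial}\rho$ and the strict plurisubharmonicity of $\psi$ together with $\kappa'\geq 1$, $\kappa''\geq 0$, this positive contribution on $(n,q)$-forms equals (after suitable choice of the twist parameter) $q|u|^{2}$ up to absorbable terms, and the balance between $+q$ and $-\delta q$ produces the positive constant $(q-\delta)|u|^{2}$ on the right-hand side. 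The integrability of $f$ against $[i\Theta(F^{*},H^{*}e^{-\xi}),\Lambda]^{-1}$, required by the hypothesis of the existence theorem of Section~\ref{section:2}, then follows from the hypothesis $\|f\|^{2}_{H^{*},\omega,\xi}<+\infty$, with constant $C_{q,\delta}$ depending only on $q$ and $\delta$.

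The main obstacle is Step~3: choosing the twist parameter so that (a) the Donnelly-Fefferman inequality $i\partial\phi\wedge\overline{\partial}\phi\leq i\partial\overline{\partial}\phi$ can be used to absorb all error terms, (b) the surplus from $[\omega,\Lambda]=q\cdot\mathrm{Id}$ on $(n,q)$-forms is fully captured, and (c) the bound is uniform as the regularization parameter $\varepsilon$ entering $\rho=\max_{\varepsilon}\{-\log(-\eta),0\}$ is fixed. Completeness of $\omega$ on $D$ (which holds because $\phi\to+\infty$ at $\partial D$) is what justifies the density of compactly supported smooth forms needed to legitimately apply the BKN machinery, and the strict plurisubharmonicity of $\psi$ provides the slack that turns all remaining inequalities into strict ones with a constant depending only on $q$ and $\delta$.
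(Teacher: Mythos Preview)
Your proposal identifies the correct ingredients---the Donnelly--Fefferman inequality $i\partial\phi\wedge\overline{\partial}\phi\le i\partial\overline{\partial}\phi$ and the fact that on $(n,q)$-forms the curvature commutator produces a factor $q$---but the route you sketch is not the one the paper takes, and as written it does not close.

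The paper does \emph{not} apply a $\tau$-twisted Bochner--Kodaira--Nakano inequality. It runs the Berndtsson--Charpentier ``multiplication'' trick concretely, following \cite{Lee-Nag}. One first exhausts $D$ by relatively compact weakly pseudoconvex $D_k$ and solves $\overline{\partial}u_k=f$ there with the \emph{good} weight $H^{*}e^{-N\psi-\kappa\circ\rho}$, whose curvature is Nakano positive by the choice of $N$; take $u_k$ minimal. Since minimality is preserved under multiplication by a nonvanishing function with the corresponding change of weight, $u_k e^{\delta\phi}$ is the minimal solution of $\overline{\partial}v=(f+\delta\,\overline{\partial}\phi\wedge u_k)e^{\delta\phi}$ in $L^{(n,q-1)}(D_k,F^{*},H^{*}e^{-N\psi-\kappa\circ\rho-\delta\phi},\omega)$. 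For \emph{this} auxiliary weight the sign in front of $\phi$ is the favorable one: using $\kappa'\ge 1$, $\kappa''\ge 0$ one gets $i\partial\overline{\partial}(\psi+\kappa\circ\rho+\delta\phi)\ge\delta\omega$, hence $[\,i\Theta,\Lambda\,]\ge q\delta$ on $(n,q)$-forms. Combining the minimal-solution estimate with $|\overline{\partial}\phi|_\omega\le 1$ yields
\[
\|u_k\|^{2}_{e^{-\xi}}\;\le\;\Bigl(1+\tfrac{1}{t}\Bigr)\tfrac{1}{q\delta}\,\|f\|^{2}_{e^{-\xi}}\;+\;(1+t)\tfrac{\delta}{q}\,\|u_k\|^{2}_{e^{-\xi}},
\]
and the self-term is absorbable precisely when $\delta<q$. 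A weak limit over $k$ finishes.

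Your $\tau$-twist route is a different dressing of the same idea, but the sketch has genuine gaps. The suggested $\tau=1-s\phi$ is negative near $\partial'D$ and you leave its regularization unspecified. More seriously, the claim that the twist ``contributes $q|u|^{2}$'' cannot hold as stated: where $\varphi$ is pluriharmonic one has $i\partial\overline{\partial}\phi=i\partial\varphi\wedge\overline{\partial}\varphi/\varphi^{2}$, a rank-one form, so $\langle[i\partial\overline{\partial}\phi,\Lambda]u,u\rangle$ on $(n,q)$-forms lies in $[0,1]\cdot|u|^{2}$, not at $q|u|^{2}$. In the paper's argument the factor $q$ arises from $[\delta\omega,\Lambda]=q\delta$ applied to the curvature of the \emph{auxiliary} weight $H^{*}e^{-N\psi-\kappa\circ\rho-\delta\phi}$ (with the flipped sign on $\phi$), not from a twist function built out of $\phi$. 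Your substitution $-\delta\,i\partial\overline{\partial}\phi=-\delta\omega+i\partial\overline{\partial}\psi+i\partial\overline{\partial}\rho$, while algebraically correct, moves in the wrong direction: the point is precisely to pass to a weight where $+\delta\phi$ appears, so that the curvature dominates $\delta\omega$ outright.
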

\begin{proof}
There exist relatively compact weakly pseudoconvex subdomains $D_{1} \subset D_{2} \subset \cdots \subset D$ which exhaust $D$.  
Because of the Nakano positivity of $(F^{*}, H^{*}e^{-(N-1)\psi})$, 
there exists 
$u_{k}$ in 
$L^{(n, q-1)}(D_{k}, F^{*}, H^{*}e^{-N\psi - \kappa \circ \rho})$ such that $\overline{\partial} u_{k} = f$ and that 
\[
\int_{D_{k}} |u_{k}|^{2}_{H^{*}, \, \omega}e^{-N\psi - \kappa \circ \rho}dV_{\omega} \leq \int_{D_{k}} \langle [i \Theta(F^{*}, H^{*}e^{-N\psi - \kappa \circ \rho}), \Lambda]^{-1}f, f \rangle_{H^{*}, \, \omega}e^{-N\psi - \kappa \circ \rho}dV_{\omega}.  
\]
Since $\phi$ and $\overline{\partial}\phi$ are bounded in $D_{k}$, 
we have that $u_{k}e^{\delta \phi}$ is the minimal solution of $\overline{\partial} (u_{k}e^{\delta \phi}) = (f + \delta \overline{\partial}\phi \wedge u_{k})e^{\delta \phi}$ in 
$L^{(n, q-1)}(D_{k}, F^{*}, H^{*}e^{-N\psi - \kappa \circ \rho -\delta\phi}, \omega)$. 
Then 
\begin{align*}
& \int_{D_{k}}|u_{k}|^{2}_{H^{*}, \, \omega}e^{-\xi} dV_{\omega}
= \int_{D_{k}}|u_{k}e^{\delta \phi}|^{2}_{H^{*}, \, \omega}e^{-N\psi - \kappa \circ \rho -\delta \phi}dV_{\omega} \\
\leq & 
\int_{D_{k}} \langle [i \Theta(F^{*}, H^{*}e^{-N\psi - \kappa \circ \rho -\delta \phi}), \Lambda]^{-1}(f+\delta \overline{\partial}\phi \wedge u_{k}), f+ \delta \overline{\partial}\phi \wedge u_{k} \rangle_{H^{*}, \, \omega} e^{-\xi}dV_{\omega}  \\
\leq & 
\left(1 + \frac{1}{t}\right) \int_{D_{k}} \langle [i \Theta(F^{*}, H^{*}e^{-N\psi - \kappa \circ \rho -\delta \phi}), \Lambda]^{-1}f, f \rangle_{H^{*}, \, \omega} e^{-\xi}dV_{\omega} \\ 
& +  
(1 + t) \delta^{2}
\int_{D_{k}} \langle [i \Theta(F^{*}, H^{*}e^{-N\psi - \kappa \circ \rho - \delta \phi}), \Lambda]^{-1} \overline{\partial}\phi \wedge u_{k}, \overline{\partial}\phi \wedge u_{k} \rangle_{H^{*}, \, \omega} e^{-\xi}dV_{\omega} 
\end{align*}
for every $t > 0$.  
We have that $\langle [i\Theta(F^{*}, H^{*}e^{-N\psi - \kappa \circ \rho - \delta \phi}), \Lambda]v, v \rangle_{H^{*}, \, \omega} \geq q \delta |v|^{2}_{H^{*}, \, \omega}$ for any $F$-valued $(n, q)$-form $v$ since $i \partial \overline{\partial} (\psi + \kappa \circ \rho + \delta \phi) \geq \delta \omega$.  
Because $|\overline{\partial}\phi|_{\omega} \leq 1$, we have that 
\[
(1+t)\delta^{2}\langle [i \Theta(F^{*}, H^{*}e^{-N\psi - \kappa \circ \rho - \delta \phi}), \Lambda]^{-1} \overline{\partial}\phi \wedge u_{k}, \overline{\partial}\phi \wedge u_{k} \rangle_{H^{*}, \, \omega} \leq (1 +t) \frac{\delta}{q}|u_{k}|^{2}_{H^{*},\, \omega}. 
\] 
By taking $t$ sufficiently small, we have that 
\[
\int_{D_{k}} |u_{k}|^{2}_{H^{*},\, \omega}e^{-\xi}dV_{\omega} \leq C_{q, \delta} \int_{D_{k}} |f|^{2}_{H^{*},\, \omega}e^{-\xi}dV_{\omega} \leq C_{q, \delta} \int_{D}|f|^{2}_{H^{*},\, \omega}e^{-\xi}dV_{\omega}.  
\]
The constant $C_{q, \delta}$ does not depend on $k$.  
Hence we may choose a subsequence of $\{u_{k}\}_{k \in \mathbb{N}}$ converging weakly in $L^{(n, q-1)}(D, F^{*}, H^{*}e^{-\xi}, \omega)$ to $u$.  
Then $u$ is the $F$-valued $(n, q)$-form we are looking for.  
\end{proof}

\begin{lemma}\label{lemma:2}
Let $\alpha \in L^{(0, q)}(D, F, He^{\xi}, \omega)$ such that $\overline{\partial} \alpha = 0$.  
Assume that $q \geq 1$ and that $\delta < n-q$.  
Then there exists $\beta \in L^{(0, q-1)}(D, F, He^{\xi}, \omega)$ such that $\overline{\partial} \beta = \alpha$ and that 
\[
\int_{D} |\beta|^{2}_{H,\, \omega}e^{\xi} dV_{\omega} \leq C_{q, \delta} \int_{D} |\alpha|^{2}_{H,\, \omega}e^{\xi}dV_{\omega}.  
\]
\end{lemma}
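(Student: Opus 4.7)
The plan is to derive Lemma~\ref{lemma:2} from Lemma~\ref{lemma:1} through $L^{2}$-Serre duality, following Lee and Nagata's strategy. The pointwise contraction of an $F$-valued $(0, q)$-form against an $F^{*}$-valued $(n, n-q)$-form yields a scalar $(n, n)$-form, and integration over $D$ gives a bounded pairing
\[
L^{(0, q)}(D, F, He^{\xi}, \omega) \times L^{(n, n-q)}(D, F^{*}, H^{*}e^{-\xi}, \omega) \to \mathbb{C},\qquad (\alpha, g) \mapsto \int_{D} \{g, \alpha\},
\]
which, together with its analogue shifting $q$ to $q-1$ and $n-q$ to $n-q+1$, identifies each factor as the topological dual of the other. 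Because $\delta < n-q$, Lemma~\ref{lemma:1} applies on the $F^{*}$ side at degree $n-q$ and, a fortiori, at degree $n-q+1$: the first application shows that every $\overline{\partial}$-closed form in $L^{(n, n-q)}(D, F^{*}, H^{*}e^{-\xi}, \omega)$ is $\overline{\partial}$-exact in $L^{2}$, while the second yields the $L^{2}$ estimate $\|g\|^{2} \leq C\|h\|^{2}$ for solutions of $\overline{\partial} g = h$ with $\overline{\partial}$-closed $h \in L^{(n, n-q+1)}$.

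Given $\overline{\partial}$-closed $\alpha \in L^{(0, q)}(D, F, He^{\xi}, \omega)$, I would define the functional
\[
\Lambda \colon \overline{\partial}\bigl(L^{(n, n-q)}(D, F^{*}, H^{*}e^{-\xi}, \omega)\bigr) \to \mathbb{C},\qquad \Lambda(\overline{\partial} g) = \pm \int_{D} \{g, \alpha\}.
\]
Well-definedness reduces to showing that $\int_{D}\{g, \alpha\}=0$ when $g$ is itself $\overline{\partial}$-closed: the first application of Lemma~\ref{lemma:1} writes $g = \overline{\partial} u$, and integration by parts together with $\overline{\partial}\alpha = 0$ makes the integral vanish. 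Boundedness of $\Lambda$ follows by picking $g$ to be the minimal $L^{2}$-norm solution provided by the second application of Lemma~\ref{lemma:1}: Cauchy--Schwarz gives $|\Lambda(h)| \leq C_{q, \delta}^{1/2}\|\alpha\|\,\|h\|$ uniformly in $h$.

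Extending $\Lambda$ by Hahn--Banach (or simply projecting onto the closed subspace) to a bounded functional on $L^{(n, n-q+1)}(D, F^{*}, H^{*}e^{-\xi}, \omega)$ and representing it through the Serre pairing produces $\beta \in L^{(0, q-1)}(D, F, He^{\xi}, \omega)$ with $\int_{D}\{h, \beta\} = \Lambda(h)$ on the image of $\overline{\partial}$ and $\|\beta\|^{2} \leq C_{q, \delta}\|\alpha\|^{2}$. Testing this identity against smooth compactly supported $g$ and integrating by parts gives $\int_{D}\{g, \overline{\partial}\beta\} = \pm\int_{D}\{\overline{\partial}g, \beta\} = \int_{D}\{g, \alpha\}$, so $\overline{\partial}\beta = \alpha$ distributionally, hence in $L^{2}$. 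The main technical obstacle is the repeated appeal to integration by parts on the complete but non-compact $(D, \omega)$; this is handled by the standard cutoff approximation exploiting the completeness of $\omega$ and the bound $|\overline{\partial}\phi|_{\omega} \leq 1$, the same device that powers Lemma~\ref{lemma:1}.
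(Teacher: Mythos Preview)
Your argument is correct and rests on the same idea as the paper's proof: deduce the $(0,q)$-estimate from Lemma~\ref{lemma:1} via $L^{2}$ Serre duality, using Lemma~\ref{lemma:1} at degree $n-q$ for well-definedness and at degree $n-q+1$ for the estimate. The only difference is packaging. The paper realises the duality through the Hodge star $\star_{F}$, which is an isometry $L^{(0,q)}(D,F,He^{\xi},\omega)\to L^{(n,n-q)}(D,F^{*},H^{*}e^{-\xi},\omega)$ intertwining $\overline{\partial}$ with $\pm\,\overline{\partial}^{*}$; it therefore converts $\alpha$ into a $\overline{\partial}^{*}$-closed form, solves $\overline{\partial}^{*}u=\star_{F}\alpha$ using the closed range furnished by Lemma~\ref{lemma:1}, and sets $\beta=\star_{F^{*}}u$. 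You instead run the dual argument through the Serre pairing and a Hahn--Banach/Riesz representation. Since the Hodge star is exactly what makes your pairing a perfect one, the two proofs are equivalent; the paper's version is a line or two shorter, while yours makes the two separate invocations of Lemma~\ref{lemma:1} and the role of completeness in the integration by parts more explicit.
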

\begin{proof}
Let $\star_{F}$ be the Hodge-star operator 
$L^{(0, q)}(D, F, He^{\xi}, \omega) \to L^{(n, n-q)}(D, F^{*}, H^{*}e^{-\xi}, \omega)$ as in \cite{Cha-Sha}.  
Let $\overline{\partial}_{F^{*}}^{*}$ be the Hilbert space adjoint to $\overline{\partial}:L^{(n, n-q-1)}(D, F^{*}, H^{*}e^{-\xi}, \omega) \to L^{(n, n-q)}(D, F^{*}, H^{*}e^{-\xi}, \omega)$.  
We note that the formal adjoint of $\overline{\partial}$ is equal to $-\star_{F^{*}} \overline{\partial} \, \star_{F}$.  
Since $\omega$ is complete, we have that $\star_{F} \alpha \in L^{(n, n-q)}(D, F^{*}, H^{*}e^{-\xi}, \omega)$ is contained in the domain of $\overline{\partial}_{F^{*}}^{*}$ and $\overline{\partial}_{F^{*}}^{*} \star_{F} \alpha = 0$.  
Lemma~\ref{lemma:1} shows that there exists $u \in L^{(n, n-q+1)}(D, F^{*}, H^{*}e^{-\xi}, \omega)$ such that $\overline{\partial}_{F^{*}}^{*}u = \star_{F}\alpha$ and that $\|u\|^{2}_{H^{*}e^{-\xi},\, \omega} \leq C_{q, \delta} \|\star_{F} \alpha\|^{2}_{H^{*}e^{-\xi},\, \omega} = C_{q, \delta} \|\alpha\|^{2}_{He^{\xi},\, \omega}$.  
By the completeness of $\omega$ again, we have that $\beta = \star_{F^{*}} u \in L^{(0, q-1)}(D, F, He^{\xi}, \omega)$ is contained in the domain of $\overline{\partial}$ and $\overline{\partial} \beta = \alpha$.  
We have that $\|\beta \|^{2}_{He^{\xi},\, \omega} \leq C_{q, \delta}\|\alpha\|^{2}_{He^{\xi},\, \omega}$ and this completes the proof.  
\end{proof}

We denote by $\Omega^{(0, q)}(D, F)$ (resp. $\Omega^{(0, q)}(\overline{D}, F)$) the space of $F$-valued smooth $(0, q)$-forms on $D$ (resp. on a neighborhood of $\overline{D}$).  
Let $\partial ' D = \{z \in \partial D\, |\, \varphi (z) = 0\}$.  
 
\begin{proposition}\label{proposition:1}
Let $1 \leq q \leq n-2$.  
Assume that $\varphi$ is pluriharmonic on a neighborhood of $\partial ' D$ and $d \varphi \neq 0$ on $\partial' D$.  
Let $\alpha \in \Omega^{(0, q)}(\overline{D}, F)$ such that $\overline{\partial} \alpha = 0$ in $D$ and 
that $\mathrm{supp}\, \alpha \cap D \subset \{z \in D\, |\, \rho = 0\}$.  
Then there exists $\beta \in \Omega^{(0, q-1)}(D, F)$ such that $\overline{\partial} \beta = \alpha$ and that $\mathrm{supp}\, \beta \subset \{z \in D \,|\, \rho \leq 1\}$.  
\end{proposition}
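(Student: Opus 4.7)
The approach is to apply Lemma~\ref{lemma:2} with a family of functions $\kappa = \kappa_{k}$ chosen to grow arbitrarily fast past $t = 1$, so that the resulting $L^{2}$-solutions concentrate on $\{\rho \leq 1\}$; a weak limit then yields an $L^{2}_{\mathrm{loc}}$-form supported in $\{\rho \leq 1\}$, which a final regularity step converts to the desired smooth $\beta$.

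First, I fix $\delta \in (0, n-q)$ (which is positive because $q \leq n-2$) and, for each $k \geq 1$, choose a convex $\kappa_{k} \in C^{\infty}(\mathbb{R})$ with $\kappa_{k}'(t) \geq 1$, $\kappa_{k}(t) = t$ on $[0, 1]$, and $\kappa_{k}(t) \geq 1 + k(t - 1)$ on $[1, \infty)$. Setting $\xi_{k} = N\psi + \kappa_{k} \circ \rho - \delta\phi$, the assumption $\mathrm{supp}\,\alpha \cap D \subset \{\rho = 0\}$ gives
\[
M := \int_{D} |\alpha|^{2}_{H,\omega}\, e^{\xi_{k}}\,dV_{\omega} = \int_{\{\rho = 0\}} |\alpha|^{2}_{H,\omega}\, e^{N\psi - \delta\phi}\,dV_{\omega},
\]
which is finite and independent of $k$; the hypotheses on $\varphi$ near $\partial'D$ are used here to ensure integrability of $e^{-\delta\phi} = (-\varphi)^{\delta}$ against $dV_{\omega}$. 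Lemma~\ref{lemma:2} then supplies minimal $L^{2}$-solutions $\beta_{k}$ of $\bar{\partial}\beta_{k} = \alpha$ with $\|\beta_{k}\|^{2}_{H^{*}e^{-\xi_{k}},\omega} \leq C_{q,\delta}\, M$; these are smooth by ellipticity of $\bar{\partial}\bar{\partial}^{*} + \bar{\partial}^{*}\bar{\partial}$.

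On a compact $K \subset D \cap \{\rho > 1\}$ one has $\inf_{K}\xi_{k} \to +\infty$, so $\int_{K}|\beta_{k}|^{2}_{H,\omega}\,dV_{\omega} \leq C_{q,\delta}\, M\, e^{-\inf_{K}\xi_{k}} \to 0$; on compact subsets of $D \cap \{\rho < 1\}$ the weights $e^{\xi_{k}}$ are uniformly bounded above and below, so $\{\beta_{k}\}$ is uniformly bounded in $L^{2}_{\mathrm{loc}}$. A diagonal weak-compactness argument then produces $\beta_{\infty} \in L^{2}_{\mathrm{loc}}(D)$ satisfying $\bar{\partial}\beta_{\infty} = \alpha$ distributionally and $\mathrm{supp}\,\beta_{\infty} \subset D \cap \{\rho \leq 1\}$. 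To upgrade $\beta_{\infty}$ to a smooth form, use that $D$ is Stein (with plurisubharmonic exhaustion built from $\phi$, $\rho$, and $\psi$) and take, by Cartan's Theorem~B, a smooth global solution $\tilde{\beta} \in \Omega^{(0,q-1)}(D, F)$ of $\bar{\partial}\tilde{\beta} = \alpha$; the difference $\tilde{\beta} - \beta_{\infty}$ is a $\bar{\partial}$-closed $L^{2}_{\mathrm{loc}}$-form on the Stein manifold $D$, hence $\bar{\partial}$-exact, and a cutoff-and-smoothing construction produces a smooth $\beta$ agreeing with $\tilde{\beta}$ outside a small neighborhood of $\{\rho = 1\}$ and vanishing on $\{\rho > 1\}$.

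The main obstacle is the last smoothing step: the neighborhood $D \cap \{\rho > 1 - \epsilon\}$ on which one would like to produce a smooth $\bar{\partial}$-primitive of $\tilde{\beta}$ is a superlevel set of the plurisubharmonic function $\rho = -\log(-\eta)$ and is therefore not a priori pseudoconvex, so Cartan's Theorem~B does not apply to it directly. Bridging this gap requires either refining the $L^{2}$-construction above to deliver smoothness and the support condition simultaneously, or exploiting the specific geometry of $\partial'D$ provided by the assumption that $\varphi$ is pluriharmonic with $d\varphi \neq 0$ there.
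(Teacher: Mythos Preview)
Your $L^{2}$-construction and weak-limit argument match the paper's: choose $\kappa_{j}$ growing to infinity on $\{t \geq 1\}$ but uniformly bounded near $0$, apply Lemma~\ref{lemma:2}, and extract a weak limit supported in $\{\rho \leq 1\}$. One correction to the integrability step: you need $\delta > 1$, not just $\delta > 0$. Near $\partial'D$ the pluriharmonicity of $\varphi$ gives $i\partial\bar\partial\phi = i\partial\varphi \wedge \bar\partial\varphi / \varphi^{2}$, so $dV_{\omega}$ itself blows up like $|\varphi|^{-2}$; the product $|\alpha|^{2}_{H,\omega}\, e^{\xi}\, dV_{\omega}$ is then controlled by $|\varphi|^{\delta - 2}$ times a smooth volume, and one must take $1 < \delta < n - q$ (whence the hypothesis $q \leq n-2$) for this to be integrable across $\{\varphi = 0\}$.

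The gap you flag at the end is real, and your proposed fix does not close it: $\{\rho > 1 - \varepsilon\}$ is a superlevel set of a plurisubharmonic function and there is no reason for $\bar\partial$-closed forms on it to be exact (and for $q = 1$ there is no primitive to take at all). The paper does not try to smooth the limit $\beta_{\infty}$ after the fact; instead it arranges that the approximants $\beta_{j}$ are uniformly bounded in every local Sobolev norm, so the weak limit is already smooth. This requires taking $\beta_{j}$ to be the \emph{minimal} $L^{2}$-solution (so $\bar\partial^{*}_{j}\beta_{j} = 0$) and imposing an extra growth condition on the family $\{\kappa_{j}\}$: each derivative $\kappa_{j}^{(k)}$ must be controlled by $(\kappa_{j})^{k+1}$ uniformly in $j$ (realized by $\kappa_{j}(t) = \sum_{l=0}^{j} t^{l}$). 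With this, the formal adjoint $\vartheta D^{K}(\chi\beta_{j})$ of a localized derivative can be rewritten, via $\bar\partial^{*}_{j}\beta_{j} = 0$ and the Leibniz rule, in terms of lower-order derivatives of $\beta_{j}$ multiplied by derivatives of $\kappa_{j}\circ\rho$; the growth condition lets these factors be absorbed into the exponential weight, and an induction on the Sobolev order (the paper's Section~\ref{section:4}) yields uniform $W^{k}$-bounds. Your choice of $\kappa_{k}$ with unspecified convex growth past $t=1$ is not enough to run this argument.
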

\begin{proof}
Let $1 < \delta < 2$.  
If $a \in \{z \in \partial D \, |\, \eta(z) \neq 0\}$, then $a \in \partial' D$.  
There exists a small neighborhood $U \subset X$ of $a$ such that $\varphi$ is pluriharmonic and 
$i \partial \overline{\partial} \phi = i \frac{\partial \varphi \wedge \overline{\partial}\varphi}{\varphi^{2}}$ on $U \cap D$.   
Since $\eta(a) \neq 0$, we may assume that $i \partial \overline{\partial} \rho$ and $\kappa \circ \rho$ are bounded.  
Then 
$e^{\xi}dV_{\omega} \leq  C |\varphi|^{\delta-2}(i \partial \overline{\partial} \psi)^{n}$ on $U \cap D$.  
Hence 
\[
\int_{U \cap D} |\alpha|^{2}_{H,\, \omega}e^{\xi}dV_{\omega} \leq C \int_{U \cap D} |\alpha|^{2}_{H, i \partial \overline{\partial} \psi} |\varphi|^{\delta -2} (i \partial \overline{\partial}\psi)^{n} < + \infty.  
\]
If $b \in \{z \in \partial D \, |\, \eta(z) = 0\}$, there exists a small open neighborhood $U \subset X$ of $b$ such that 
$U \cap \mathrm{supp}\, \alpha = \emptyset$.  
Hence we have that $\alpha \in L^{(0, q)}(D, F, He^{\xi}, \omega)$.  

Let $\kappa_{j} \in C^{\infty}(\mathbb{R})$ ($j = 1, 2, \cdots$) be functions which satisfies the following conditions: 
\begin{itemize}
\item[(i)]
$\kappa_{j}(t) \leq \kappa_{j+1}(t)$ for any $j \in \mathbb{N}$ and $t \geq 0$, 
\item[(ii)]
$\kappa_{j}'(t) \geq 1$ and $\kappa_{j}''(t) \geq 0$ for any $j \in \mathbb{N}$ and $t \geq 0$, 
\item[(iii)]
there exists a positive constant $C$ such that 
$\kappa_{j}(t) \leq C$ for any $j \in \mathbb{N}$ and $0 \leq t \leq 1/2$, 
\item[(iv)]
$\lim_{j \to \infty} \kappa_{j}(t) = +\infty$ for any $t \geq 1$.  
\end{itemize}
Let $\xi_{j} = N\psi + \kappa_{j}\circ \rho - \delta \phi$.  
We have that $\alpha \in L^{(0, q)}(D, F, He^{\xi_{j}}, \omega)$.  
Since $q \leq n-2$, there exist $\beta_{j} \in L^{(0, q-1)}(D, F, He^{\xi_{j}}, \omega)$ such that $\overline{\partial} \beta_{j} = \alpha$ and $\|\beta_{j}\|^{2}_{He^{\xi_{j}},\, \omega} \leq C_{q, \delta}\|\alpha\|^{2}_{He^{\xi_{j}}, \omega}$ by Lemma~\ref{lemma:2}.  
Because $\kappa_{j}\circ \rho \leq C$ on $\mathrm{supp}\, \alpha$, we have that $\|\alpha\|^{2}_{He^{\xi_{j}},\, \omega}$ does not depend on $j$.  
Take a weakly convergent subsequence $\{\beta_{j_{\nu}}\}_{\nu \in \mathbb{N}}$ in $L^{(0, q-1)}(D, F, He^{N\psi - \delta \phi}, \omega)$ 
and there exists the weak limit $\beta \in L^{(0, q-1)}(D, F, He^{N\psi - \delta \phi} ,\omega)$ 
such that $\overline{\partial}\beta = \alpha$ and $\mathrm{supp}\, \beta \subset \{z \in D\, |\, \rho(z) \leq 1\}$.  
The regularity of $\beta$ will be discussed in Section~\ref{section:4}.  
\end{proof}

\section{Estimate of the Sobolev norm}\label{section:4}
It is enough to consider the case $q \geq 2$.  
In the proof of Proposition~\ref{proposition:1}, we take $\kappa_{j} \in C^{\infty}(\mathbb{R})$ ($j \in \mathbb{N}$) which satisfy four conditions.  
We add the following condition to $\{\kappa_{j}\}_{j \in \mathbb{N}}$:  
\begin{itemize} 
\item[(v)]
For any non-negative integer $k$, there exists positive constant $C_{k}$ which does not depend on $j$ and satisfies 
\[
\frac{\kappa_{j}^{(k)}(t)}{(\kappa_{j}(t))^{k+1}} < C_{k}
\]
for any $j \in \mathbb{N}$ and $t \geq 0$.  
\end{itemize}
\begin{lemma}\label{lemmma:3}
There exist functions $\kappa_{j} \in C^{\infty}(\mathbb{R})$ ($j \in \mathbb{N}$) which satisfy the above five conditions.  
\end{lemma}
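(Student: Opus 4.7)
The crux is condition~(v): the prototype $t \mapsto C/(a - t)$ satisfies $\kappa^{(k)}/\kappa^{k+1} \equiv k!/C^k$, so it naturally fulfills (v). My plan is to set each $\kappa_j$ to behave exactly like this prototype on the interval where it grows from $O(1)$ to infinity, and then to continue it smoothly as an eventually linear convex function.

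Concretely, fix a large constant $C > 0$ and set $a_j = 1 + 1/j$. On $[0,\, 1 + 1/(4j)]$ I define
\[
\kappa_j(t) = 1 + t + \frac{C}{a_j - t}.
\]
All five conditions are immediate on this interval: convexity together with $\kappa_j' \geq 1$ gives (ii); $\kappa_j(1/2) \leq 3/2 + 2C$ gives (iii); $\kappa_j(1) = 2 + Cj \to \infty$ gives (iv); monotonicity $\kappa_j \leq \kappa_{j+1}$ follows from $a_j \searrow 1$; and for (v) one has $\kappa_j^{(k)}(t) = k!\, C/(a_j - t)^{k+1}$ for $k \geq 2$ together with the elementary bound $\kappa_j(t) \geq C/(a_j - t)$, so $\kappa_j^{(k)}/\kappa_j^{k+1} \leq k!/C^k$.

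To extend $\kappa_j$ to all of $\mathbb{R}$, I prescribe $\kappa_j''$ via a smooth non-negative cutoff: set $\kappa_j''(t) = \psi_j(t) \cdot 2C/(a_j - t)^3$, where $\psi_j \in C^\infty$ equals $1$ on $(-\infty,\, 1 + 1/(4j)]$ and vanishes on $[1 + 3/(4j),\, \infty)$---obtained by rescaling a single fixed profile so that $|\psi_j^{(m)}| \leq C_m (4j)^m$---and integrate with initial data matching the formula above at $t = 0$. Since $\kappa_j'' \geq 0$ by construction, the extension is automatically $C^\infty$, convex, and eventually linear, so (ii) and (iv) persist; monotonicity (i) in the transition and extension follows from the convexity lower bound $\kappa_j(t) \geq (2 + Cj) + (1 + Cj^2)(t - 1)$ for $t \geq 1$, whose slope grows with $j$ fast enough to absorb the explicit formula for $\kappa_{j-1}$ at the right endpoint of its active region. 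The main obstacle will be verifying (v) on the transition zone $[1 + 1/(4j),\, 1 + 3/(4j)]$: expanding $\kappa_j^{(k)}$ via the Leibniz rule produces terms $\psi_j^{(m)} \cdot \partial_t^{k-2-m}[2C/(a_j - t)^3]$, each bounded by $C_k (4j)^{k+1}$ using $a_j - t \geq 1/(4j)$ and the derivative bounds on $\psi_j$. Since $\kappa_j \geq 4Cj/3$ throughout the transition, $\kappa_j^{k+1} \geq (4Cj/3)^{k+1}$, and the ratio $\kappa_j^{(k)}/\kappa_j^{k+1}$ is bounded by a constant $C_k'/C^{k+1}$ uniformly in $j$ and $t$, giving (v).
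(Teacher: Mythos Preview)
Your construction via $1+t+C/(a_j-t)$ followed by a smooth cutoff is a natural idea for condition~(v), and your verification of (v) on both the explicit and transition zones is correct. The genuine gap is in condition~(i). You assert that the supporting line $L_j(t)=(2+Cj)+(1+Cj^2)(t-1)$, which lower-bounds $\kappa_j$ by convexity, already dominates $\kappa_{j-1}$ at the right endpoint $t_*=1+\tfrac{1}{4(j-1)}$ of the latter's explicit region. But a direct computation gives
\[
L_j(t_*)-\kappa_{j-1}(t_*)=C\Bigl[\,j+\frac{j^2}{4(j-1)}-\frac{4(j-1)}{3}\Bigr]\ \sim\ -\frac{Cj}{12}\qquad(j\to\infty),
\]
so the claimed inequality fails for all large $j$, and enlarging $C$ only makes it worse. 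The tangent at $t=1$ is simply too crude: on $[1,t_*]$ one has $\kappa_{j-1}''\sim (j-1)^3$, so $\kappa_{j-1}$ outruns any line of slope $O(j^2)$ there. Moreover, since $1+\tfrac{3}{4j}>1+\tfrac{1}{4(j-1)}$ for $j\geq 2$, the transition windows of consecutive $\kappa_j$ overlap, and verifying $\kappa_{j-1}\leq\kappa_j$ across these overlaps and in the eventual linear tails would require tracking the integrated cutoffs $\int\psi_j\cdot 2C/(a_j-s)^3\,ds$ far more carefully than you have done. As written, (i) is not established.

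For comparison, the paper sidesteps all gluing by taking $\kappa_j(t)=\sum_{l=0}^{j}t^l$. Conditions (i)--(iv) are then immediate, and (v) follows from the single coefficient inequality
\[
\Bigl(\sum_{l=0}^{j}t^l\Bigr)^{k+1}\ \geq\ \sum_{l=0}^{j}\binom{l+k}{l}t^l\ =\ \frac{1}{k!}\sum_{l=0}^{j}\frac{(l+k)!}{l!}\,t^l\ \geq\ \frac{1}{k!}\,\kappa_j^{(k)}(t),
\]
the first step being the observation that the coefficient of $t^l$ on the left counts compositions of $l$ into $k+1$ non-negative parts. Your rational-function idea is a legitimate alternative route to (v), but the polynomial choice makes (i)--(iv) free.
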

\begin{proof}
We define $\kappa_{j}(t) = \sum_{l=0}^{j}t^{l}$. 
It is easy to see that $\{\kappa_{j}\}_{j \in \mathbb{N}}$ satisfies the conditions (i), (ii), (iii), (iv).  
For $t \geq 0$, we have that 
\[
(\kappa_{j}(t))^{k+1} \geq \sum_{l = 0}^{j} \binom{l+k}{l}t^{l} = \frac{1}{k!} \sum_{l = 0}^{j} \frac{(l+k)!}{l!} t^{l} \geq \frac{1}{k!} \kappa_{j}^{(k)}(t).  
\]
Hence $\{\kappa_{j}\}_{j \in \mathbb{N}}$ satisfies (v).  
\end{proof}
Assume $\{\kappa_{j}\}_{j \in \mathbb{N}}$ satisfies the above five conditions.   
Take
 $\{\beta_{j}\}_{j \in \mathbb{N}}$ and $\beta$ as in the proof of Proposition~\ref{proposition:1}.  
We may assume that $\beta_{j}$ is orthogonal to the kernel of $\overline{\partial}: L^{(0, q-1)}(D, F, He^{\xi_{j}}, \omega) \to L^{(0, q)}(D, F, He^{\xi_{j}}, \omega)$ and $\beta_{j}$ is smooth (cf.\,\cite{Hor}).  
Let $a \in D$ and let $\chi \in C^{\infty}(D)$ be a non-negative function such that $\chi = 1$ on a neighborhood of $a$ and $\mathrm{supp}\, \chi$ is sufficiently small.  
Denote by $H' = He^{N\psi - \delta \phi}$ the hermitian metric of $F$.  
Then $H'e^{\kappa_{j}\circ \rho} = He^{\xi_{j}}$.  
We may assume that $\mathrm{supp}\, \chi$ is contained in a complex chart $U$ and that $F$ is trivialized there.  
Let  $(x_{1}, \ldots, x_{2n})$ be a local coordinates on $U$.  
Let $K = (k_{1}, k_{2}, \ldots, k_{2n})$ be the multi-index and let $|K| = k_{1} + \cdots + k_{2n}$.  
Define $D^{K}:\Omega^{(0, q-1)}(U, F) \to \Omega^{(0, q-1)}(U, F)$ by $D^{K} = \left(\frac{\partial}{\partial x_{1}}\right)^{k_{1}} \cdots \left(\frac{\partial}{\partial x_{2n}}\right)^{k_{2n}}$.  
\begin{lemma}\label{lemma:4}
Let $k$ be a non-negative integer.  
Then
\[
\sum_{|K| = k}\int_{U} |D^{K} (\chi \beta_{j})|^{2}_{H',\, \omega}e^{\kappa_{j}\circ \rho/2^{k}} dV_{\omega} < C_{k} 
\]
for any $j \in \mathbb{N}$ where $C_{k}$ is a constant which does not depend on $j$.  
\end{lemma}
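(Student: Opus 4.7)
I'll proceed by induction on $k$.

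For the base case $k=0$, Lemma~\ref{lemma:2} gives $\|\beta_j\|^2_{He^{\xi_j},\omega}\le C_{q,\delta}\|\alpha\|^2_{He^{\xi_j},\omega}$, and condition (iii) forces $\kappa_j\circ\rho\le C$ on $\mathrm{supp}\,\alpha$. Hence the right-hand side is uniformly bounded in $j$, and $\int_U|\chi\beta_j|^2_{H',\omega}e^{\kappa_j\circ\rho}\,dV_\omega\le C_0$ follows.

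For the inductive step, the idea is to exploit that $\beta_j$, being chosen orthogonal to $\ker\overline{\partial}$ in $L^{(0,q-1)}(D,F,He^{\xi_j},\omega)$, satisfies the elliptic system
\[
\overline{\partial}\beta_j=\alpha,\qquad \overline{\partial}^{*}_{\xi_j}\beta_j=0,
\]
where $\overline{\partial}^{*}_{\xi_j}$ is the formal adjoint with respect to $(He^{\xi_j},\omega)$. The weighted adjoint differs from $\overline{\partial}^{*}_{H,\omega}$ by a zero-order operator linear in $\overline{\partial}\xi_j$, and $\xi_j=N\psi+\kappa_j\circ\rho-\delta\phi$. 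Working in coordinates on $U$, with a chain of nested cutoffs $\chi=\chi_0,\chi_1,\ldots,\chi_k$ such that $\chi_{i+1}\equiv 1$ on $\mathrm{supp}\,\chi_i$, I apply a standard Garding-type interior estimate for the elliptic system $(\overline{\partial},\overline{\partial}^{*}_{\xi_j})$ to $\chi_0\beta_j$ with weight $e^{\kappa_j\circ\rho/2^k}$. Commuting $D^K$ past $\overline{\partial}$ and $\overline{\partial}^{*}_{\xi_j}$ produces error terms carrying derivatives $\kappa_j^{(\ell)}\circ\rho$ with $\ell\le k$, which by condition (v) are bounded by $\kappa_j^{\ell+1}$. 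Collecting the estimates yields
\[
\sum_{|K|=k}\int_U |D^K(\chi\beta_j)|^2\, e^{\kappa_j/2^k}\,dV_\omega \le C\!\!\sum_{|K|\le k-1}\int_U |D^K(\chi_1\beta_j)|^2\, P_k(\kappa_j\circ\rho)\, e^{\kappa_j/2^k}\,dV_\omega + C',
\]
where $P_k$ is a polynomial whose degree depends only on $k$, and $C'$ collects bounded terms involving $\alpha$.

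The crucial point, which is the reason condition (v) was imposed, is the elementary inequality
\[
P_k(s)\,e^{s/2^k}\;\le\; C_k\,e^{s/2^{k-1}}\qquad(s\ge 0),
\]
uniform in $j$, since any fixed polynomial in $s$ is eventually dominated by the extra exponential factor $e^{s/2^k}$ obtained by weakening the weight. Substituting, the right-hand side becomes $C\sum_{|K|\le k-1}\int_U |D^K(\chi_1\beta_j)|^2\, e^{\kappa_j/2^{k-1}}\,dV_\omega + C'$, which is bounded by the inductive hypothesis applied with the enlarged cutoff $\chi_1$. This closes the induction.

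The main obstacle is the algebraic bookkeeping of the commutators $[D^K,\overline{\partial}^{*}_{\xi_j}]$: one must verify that $k$-fold differentiation introduces only derivatives of $\xi_j$ of order $\le k$ (so that condition (v) covers them) and that the polynomial $P_k$ has degree depending only on $k$ (so that $C_k$ is uniform in $j$). Once this is settled, the Garding-type elliptic estimate and the ``exponential beats polynomial'' step are routine.
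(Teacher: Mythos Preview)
Your proposal is correct and follows the paper's strategy: induction on $k$, the elliptic system $\overline{\partial}\beta_j=\alpha$, $\overline{\partial}^{*}_{\xi_j}\beta_j=0$, condition~(v) to control commutator terms involving $\kappa_j^{(\ell)}\circ\rho$, and the weight-halving observation $P(s)e^{s/2^k}\le C\,e^{s/2^{k-1}}$ to close the induction.

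The paper's implementation differs in two details. It first isolates a separate Lemma~5 bounding $\int_U|\vartheta D^K(\chi\beta_j)|^2_{H',\omega}\,e^{\kappa_j\circ\rho/2^{k+1}}dV_\omega$ (with $\vartheta=\ast\partial\ast$), using that $\overline{\partial}^*_{\xi_j}\beta_j=0$ rewrites $\vartheta\beta_j$ as a \emph{zeroth-order} expression in $\beta_j$; it then feeds this into an explicit Bochner--Kodaira integration-by-parts identity in an orthonormal frame, rather than invoking a black-box G{\aa}rding estimate. Because of that zeroth-order reduction the paper works with a single cutoff $\chi$ throughout, whereas you use a chain of nested cutoffs $\chi_0,\chi_1,\ldots$; your scheme is also correct, just less economical. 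One small slip in your bookkeeping: since $\overline{\partial}^{*}_{\xi_j}$ already carries a factor $\kappa_j'\circ\rho$, commuting $D^K$ past it produces $\kappa_j^{(\ell)}$ with $\ell\le k+1$, not $\ell\le k$ (compare the range $l\le k+1$ in the paper's Lemma~5); this is harmless, as condition~(v) covers all orders.
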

Let $W^{k}_{(0, q-1)}(U, F)$ be the Sobolev space of $F$-valued $(0, q-1)$-forms whose derivatives up to order $k$ are in $L^{(0, q-1)}(U, F, H', \omega)$.  
Lemma~\ref{lemma:4} implies that the subset $\{\chi \beta_{j}\}_{j \in \mathbb{N}}$ is bounded in $W^{k}_{(0, q-1)}(U, F)$.  
By taking weakly convergent subsequence of $\{ \chi \beta_{j_{\nu}}\}_{\nu}$, we have that $\beta \in W^{k}_{(0, q-1)}(U', F)$ where $U'$ is a sufficiently small open neighborhood of $a$ contained in $U$.  
By the Sobolev lemma (cf.\,\cite{Dem}), it follows that $\beta$ is smooth in $D$.  

The proof of Lemma~\ref{lemma:4} proceeds by induction on $k$.  
We have seen that $\|\beta_{j}\|^{2}_{H'e^{\kappa_{j}\circ \rho},\, \omega}$ does not depend on $j$, and the case $k = 0$ holds.  
Let $\nabla_{j}$ be the hermitian connection on $F$ which is compatible with $H'e^{\kappa_{j}\circ \rho}$.  
By the trivialization of $F$ in $U$, 
we have that $\nabla_{j} = d + \Gamma_{H'} + \partial \kappa_{j} \circ \rho \, \mathrm{Id}_{F}$ where $\Gamma_{H'}$ is the connection form defined by $H'$.  
Let $\ast$ be the Hodge-star operator from $F$-valued $(p, q)$-form to $F$-valued $(n-q, n-p)$-form.  
Define $\vartheta = \ast \partial *: \Omega^{(0, q-1)}(U, F) \to \Omega^{(0, q-2)}(U, F)$.  
Then $-\vartheta$ is the formal adjoint of $\overline{\partial}: L^{(0, q-1)}(U, F, H_{Id}, \omega) \to L^{(0, q)}(U, F, H_{Id}, \omega)$.  
Here $H_{Id}$ is the flat metric of the trivial vector bundle $F$ over $U$.  
\begin{lemma}\label{lemma:5}
Under the hypothesis that Lemma~\ref{lemma:4} holds for $\leq k$,
we have that
\[
\sum_{|K| = k}\int_{U} |\vartheta D^{K}(\chi \beta_{j})|^{2}_{H',\, \omega}e^{\kappa_{j}\circ \rho/2^{k+1}} dV_{\omega} < C_{k+1}
\]
for any $j \in \mathbb{N}$ where $C_{k+1}$ is a constant which does not depend on $j$.  
\end{lemma}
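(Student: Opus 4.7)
The plan is to combine two ingredients: the minimality of $\beta_{j}$, which forces a pointwise algebraic identity replacing $\vartheta \beta_{j}$ by a zeroth-order multiplication on $\beta_{j}$, and condition~(v), which lets the derivatives of $\kappa_{j}\circ \rho$ that arise be absorbed into the weight $e^{\kappa_{j}\circ \rho/2^{k+1}}$ at the cost of boosting it to $e^{\kappa_{j}\circ \rho/2^{k}}$.

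First I would reformulate the orthogonality condition. Since $\beta_{j}$ is orthogonal to $\ker \overline{\partial}$ in $L^{(0, q-1)}(D, F, He^{\xi_{j}}, \omega)$ and is smooth, it satisfies $\overline{\partial}^{\,*}_{He^{\xi_{j}}} \beta_{j} = 0$ pointwise. In the trivialization of $F$ over $U$, the Chern connection for $H'e^{\kappa_{j}\circ \rho}$ is $\nabla_{j} = d + \Gamma_{H'} + \partial(\kappa_{j}\circ \rho)\,\mathrm{Id}_{F}$, and the standard formula for the formal adjoint gives $\overline{\partial}^{\,*}_{He^{\xi_{j}}} = -\vartheta + M_{j}$, where $M_{j}$ is a zeroth-order contraction operator whose coefficients are built from $\Gamma_{H'}$ (smooth and $j$-independent on $\mathrm{supp}\,\chi$) together with $\kappa_{j}'(\rho)\,\overline{\partial}\rho$. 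Consequently, $\vartheta \beta_{j} = M_{j}\beta_{j}$ pointwise, and on $\mathrm{supp}\,\chi$ one has $|M_{j}\beta_{j}|_{H',\omega} \le C(1 + \kappa_{j}'(\rho))|\beta_{j}|_{H',\omega}$ with $C$ independent of $j$.

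Next I would apply $D^{K}$ with $|K|=k$ to $\chi\beta_{j}$ and commute $\vartheta$ through. The commutator $[\vartheta, D^{K'}]$ for $|K'| \le k$ is a differential operator of order $\le k$ whose coefficients involve derivatives of $\omega$ on $\mathrm{supp}\,\chi$; these are smooth functions independent of $j$. Expanding $\vartheta(\chi\beta_{j}) = \chi M_{j}\beta_{j} + [\vartheta, \chi]\beta_{j}$ and applying the Leibniz rule repeatedly, one obtains a finite sum
\[
\vartheta D^{K}(\chi\beta_{j}) = \sum_{|K'|\le k} A^{(K')}_{j}\cdot D^{K'}(\chi\beta_{j}),
\]
in which each $A^{(K')}_{j}$ is a polynomial in $\kappa_{j}^{(1)}(\rho), \ldots, \kappa_{j}^{(k+1)}(\rho)$ with coefficients smooth in the $j$-independent data (derivatives of $\chi$, $\rho$, $\Gamma_{H'}$ and $\omega$).

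By condition~(v), $\kappa_{j}^{(l)}(\rho) \le C_{l}(\kappa_{j}\circ \rho)^{l+1}$, so each $|A^{(K')}_{j}|$ is dominated by $C(\kappa_{j}\circ \rho)^{L}$ for some integer $L$ independent of $j$. Since $t^{2L} \le C'_{L}\,e^{t/2^{k+1}}$ for all $t\ge 0$, we have $(\kappa_{j}\circ \rho)^{2L}\,e^{\kappa_{j}\circ \rho/2^{k+1}} \le C'_{L}\,e^{\kappa_{j}\circ \rho/2^{k}}$; combining with $e^{\kappa_{j}\circ \rho/2^{k}} \le e^{\kappa_{j}\circ \rho/2^{|K'|}}$ for $|K'|\le k$ reduces the integral to a sum of quantities uniformly controlled by the inductive hypothesis of Lemma~\ref{lemma:4}. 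The main obstacle I expect is the combinatorial bookkeeping in the previous step: verifying that after all Leibniz expansions and commutators only derivatives $D^{K'}(\chi\beta_{j})$ with $|K'|\le k$ appear and that the accompanying factors depend on $\kappa_{j}$ only through $\kappa_{j}^{(l)}(\rho)$ with $l\le k+1$; once this is in place, condition~(v) and the exponential absorption are essentially mechanical.
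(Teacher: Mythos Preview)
Your proposal is correct and follows essentially the same route as the paper: use $\overline{\partial}^{*}_{j}\beta_{j}=0$ to replace $\vartheta\beta_{j}$ by a zeroth-order operator involving $\kappa_{j}'\circ\rho$ and $\Gamma_{H'}$, commute $\vartheta$ past $D^{K}$ (the paper does this via $[\ast,D^{K}]$) and expand by Leibniz, then absorb the resulting $\kappa_{j}^{(l)}\circ\rho$ factors using condition~(v) together with $\sup_{r\ge 0} r^{2L}e^{-r/2^{k+1}}<\infty$. The only cosmetic difference is that the paper writes the expansion in terms of $D^{K'}\beta_{j}$ with coefficients each linear in a single $\kappa_{j}^{(l)}\circ\rho$, whereas you allow polynomials; your coarser bookkeeping is equally sufficient for the absorption step.
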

\begin{proof}
We denote by $\overline{\partial}_{j}^{*}$ the Hilbert space adjoint to 
$\overline{\partial}: L^{(0, q-2)}(D, F, H'e^{\kappa_{j}\circ \rho}, \omega) \to L^{(0, q-1)}(D, F, H'e^{\kappa_{j}\circ \rho}, \omega)$.  
The formal adjoint of $\overline{\partial}$ is given by $-\ast (\partial + \Gamma_{H'} + \partial \kappa_{j}\circ \rho \, \mathrm{Id}) \ast$.  
Since $\beta_{j}$ is orthogonal to the kernel of $\overline{\partial}$ in $L^{(0, q-1)}(D, F, H'e^{\kappa_{j}\circ \rho}, \omega)$, we have that $\overline{\partial}_{j}^{*} \beta_{j} = 0$.  
Then $\ast \partial \ast \beta_{j} = - \ast \Gamma_{H'} \ast \beta_{j} - \ast \partial \kappa_{j} \circ \rho \wedge \beta_{j}$.  
Hence 
\[
\ast \partial \ast (\chi \beta_{j}) = \ast (\partial \chi \wedge \ast \beta_{j}) - \ast \Gamma_{H'} \ast (\chi \beta_{j}) - \partial \kappa_{j} \circ \rho \wedge (\chi \beta_{j}).  
\]
Let $K = (k_{1}, \ldots, k_{2n})$ such that $k = k_{1} + \cdots + k_{2n}$.  
It follows that the order of the differential operator $\ast D^{K} - D^{K} \ast$ is $k-1$.  
Then 
\begin{align*}
\vartheta D^{K} (\chi \beta_{j}) = \ast \partial \ast D^{K} (\chi \beta_{j}) = D^{K} \ast \partial \ast (\chi \beta_{j}) + L \beta_{j}, 
\end{align*}
where $L$ is the differential operator of order $k$.  
The term $D^{K}\ast \partial \ast (\chi \beta_{j})$ is written as 
\[
\sum_{k' \leq k,\, l \leq k+1} \sum_{|K'| = k'} t_{K', l}\kappa_{j}^{(l)} \circ \rho D^{K'} \beta_{j}   
\]
where $t_{K', l}$ is the function which does not depend on $j$.  
It follows that 
\begin{align*}
& \int_{U} |\vartheta D^{K}(\chi \beta_{j})|^{2}_{H',\,\omega}e^{\kappa_{j}\circ \rho/2^{k+1}}dV_{\omega} \\
\leq 
& \sum_{k' \leq k,\, l \leq k+1}\sum_{|K'| = k'} \int_{U} (|t_{K', l}\kappa_{j}^{(l)}\circ \rho|^{2} |D^{K'}\beta_{j}|^{2}_{H',\, \omega} + |L \beta_{j}|^{2}_{H',\, \omega}) e^{\kappa_{j}\circ \rho/2^{k+1}} dV_{\omega} \\
\leq 
& C \sum_{k' \leq k,\, l \leq k+1}\sum_{|K'| = k'} \int_{U} \frac{|\kappa_{j}^{(l)}\circ \rho|^{2}}{(\kappa_{j}\circ \rho)^{2(l+1)}} |D^{K'}\beta_{j}|_{H',\, \omega} e^{\kappa_{j}\circ \rho/2^{k}} (\kappa_{j} \circ \rho)^{2(l+1)} e^{- \kappa_{j}\circ \rho/2^{k+1}}  dV_{\omega}, 
\end{align*}
where $C$ does not depend on $j$.  
Since $\sup_{r \geq 0}r^{2(l+1)} e^{-r/2^{k+1}} < + \infty$, 
the condition (v) of $\{\kappa_{j}\}_{j \in \mathbb{N}}$ shows that 
the last term of the above inequality bounded from above by the constant which does not depend on $j$.  
\end{proof}
A standard calculus shows that 
Lemma~\ref{lemma:5} implies Lemma~\ref{lemma:4} (cf.\,Chapter~5 of \cite{Hor}).  
\begin{proof}[Proof of Lemma~\ref{lemma:4}]
We may assume that there exists an orthonormal frame $(\theta_{1}, \ldots, \theta_{n})$ of $T^{*}_{X}$ on $U$.  
For any $f \in C^{\infty}(U)$, 
we define $\partial f / \partial \theta_{t}$ and $\partial f /\partial \bar{\theta}_{t}$ by $df = \sum_{t = 1}^{n} \partial f /\partial \theta_{t}\, \theta_{t} + \partial f /\partial \bar{\theta}_{t}\, \bar{\theta}_{t}$.  
Let $I = (i_{1}, \ldots, i_{q-1})$ be a multi-index with $i_{1} < \cdots < i_{q-1}$ and $\bar{\theta}_{I} = \bar{\theta}_{i_{1}} \wedge \cdots \wedge \bar{\theta}_{i_{q-1}}$.  
Let $g = \sum_{|I| = q-1} g_{I}\bar{\theta}_{I}$ be a smooth $(0, q-1)$-form on $U$ with compact support.  
Define $Ag = \sum_{|I| = q-1}\sum_{l = 1}^{n} \frac{\partial g_{I}}{\partial \bar{\theta_{l}}} \bar{\theta}_{l}\wedge \bar{\theta}_{I}$ and 
$Bg = \sum_{|J| = q-2} \sum_{l} \frac{\partial g_{lJ}}{\partial \theta_{l}} \bar{\theta}_{J}$.  
Then $\overline{\partial} g - Ag$ and $-\vartheta g - Bg$ have no term where $g_{I}$ is differentiated.    
Hence we have that 
\begin{equation}\label{equation:1}
\int_{U}(|Ag|^{2}_{\omega} + |Bg|_{\omega}^{2})e^{\kappa_{j}\circ \rho/2^{k+1}} dV_{\omega} \leq \int_{U}(2|\overline{\partial}g|_{\omega}^{2} + 2|\vartheta g|_{\omega}^{2} + C|g|^{2}_{\omega}) e^{\kappa_{j}\circ \rho/2^{k+1}} dV_{\omega}.   
\end{equation}
The left-hand side of the above inequality is equal to 
\begin{align*}
& \sum_{|I| = q-1}\sum_{l = 1}^{n} \int_{U}\big|\frac{\partial g_{I}}{\partial \bar{\theta}_{l}} \big|^{2}e^{\kappa_{j}\circ \rho/2^{k+1}}dV_{\omega}
- \sum_{|J| = q-2} \sum_{l, t} \int_{U}\left(\frac{\partial g_{lJ}}{\partial \bar{\theta}_{t}}\frac{\partial \bar{g}_{tJ}}{\partial \theta_{l}} - \frac{\partial g_{lJ}}{\partial \theta_{l}} \frac{\partial \bar{g}_{tJ}}{\partial \bar{\theta}_{t}} \right)e^{\kappa_{j}\circ \rho/2^{k+1}} dV_{\omega}.  
\end{align*}
By integrating by parts, the second integral of the above is equal to 
\[
\int_{U} \left(\frac{\partial^{2}g_{lJ}}{\partial \bar{\theta}_{t} \partial \theta_{l}} - \frac{\partial^{2} g_{lJ}}{\partial \theta_{l} \partial \bar{\theta}_{t}}\right)\bar{g}_{tJ}e^{\kappa_{j} \circ \rho/2^{k+1}}dV_{\omega} + \int_{U}R e^{\kappa_{j} \circ \rho/2^{k+1}}dV_{\omega}.  
\]
Here $R$ is written as the sum of 
$s_{1}\, g_{I_{1}} \kappa_{j}^{(i)}\circ \rho \, \partial g_{I_{2}}/\partial \bar{\theta}_{t}$ and $s_{2}\, g_{I_{1}} g_{I_{2}} \kappa^{(i)}\circ \rho$ ($i = 0, 1, 2$) where $s_{1}$ and $s_{2}$ are the smooth functions which depend on neither $g$ nor $j$.   
The order of the differential operator 
$
\partial^{2} / \partial \theta_{l} \partial \bar{\theta}_{t} - \partial^{2} / \partial \bar{\theta}_{t} \partial \theta_{l}
$
is one.  
The condition (v) of $\{\kappa_{j}\}_{j \in \mathbb{N}}$ shows that 
\begin{align*}
& \int_{U} \big| g_{I_{1}}\frac{\partial g_{I_{2}}}{\partial \bar{\theta}_{t}} \big| \kappa_{j}^{(i)}\circ \rho \, e^{\kappa_{j}\circ \rho /2^{k+1}}dV_{\omega} \\
\leq & 
\frac{1}{\varepsilon}\int_{U}|g_{I_{1}} \kappa_{j}^{(i)}\circ \rho|^{2} \, e^{\kappa_{j}\circ \rho/2^{k+1}} dV_{\omega} 
+ \varepsilon \int_{U} \big| \frac{\partial g_{I_{2}}}{\partial \bar{\theta}_{t}}\big|^{2} e^{\kappa_{j}\circ \rho/2^{k+1}}dV_{\omega} \\
\leq & 
C_{\varepsilon}\int_{U}|g_{I_{1}}|^{2}\ e^{\kappa_{j}\circ \rho/2^{k}} dV_{\omega} 
+ \varepsilon \int_{U} \big| \frac{\partial g_{I_{2}}}{\partial \bar{\theta}_{t}}\big|^{2} e^{\kappa_{j}\circ \rho/2^{k+1}}dV_{\omega} 
\end{align*}
for any $\varepsilon > 0$.  
Hence the left-hand side of (\ref{equation:1}) is bounded below by 
\[
\frac{1}{2}\sum_{|I| = q-1}\sum_{l = 1}^{n} \int_{U}\big|\frac{\partial g_{I}}{\partial \bar{\theta}_{l}} \big|^{2}e^{\kappa_{j}\circ \rho/2^{k+1}}dV_{\omega} - C\int_{U} |g|^{2}_{\omega} e^{\kappa_{j}\circ \rho/2^{k}}dV_{\omega},  
\]
where $C$ does not depend on $j$.  
Because the order of the differential operator $\partial^{2}/ \partial \theta_{l}\partial\bar{\theta}_{l} - \partial^{2}/ \partial\bar{\theta}_{l} \partial \theta_{l}$ is one, we have that 
\begin{align*}
& \int_{U} \big| \frac{\partial g_{I}}{\partial \theta_{l}} \big|^{2} e^{\kappa_{j} \circ \rho/2^{k+1}} dV_{\omega} - \int_{U} \big| \frac{\partial g_{I}}{\partial \bar{\theta}_{l}} \big|^{2} e^{\kappa_{j} \circ \rho/2^{k+1}} dV_{\omega} \\
\leq & 
\varepsilon \int_{U} \left( \big| \frac{\partial g_{I}}{\partial \theta_{l}} \big|^{2} + \big| \frac{\partial g_{I}}{\partial \bar{\theta}_{l}} \big|^{2} \right) e^{\kappa_{j}\circ \rho/2^{k+1}} dV_{\omega} + C_{\varepsilon} \int_{U} |g_{I}|^{2} e^{\kappa_{j}\circ \rho/2^{k}} dV_{\omega} 
\end{align*}
for any $\varepsilon > 0$.  
Since $F$ is trivialized in $U$ and $H'$ is equivalent to the flat metric $H_{\mathrm{Id}}$, 
we can replace $g$ by $D^{K}(\chi \beta_{j})$ in the above calculus, and we obtain 
\begin{align*}
& \sum_{l = 1}^{n}\int_{U} \left( \big|\frac{\partial D^{K} (\chi \beta_{j})}{\partial \theta_{l}} \big|^{2}_{H',\, \omega} + \big| \frac{\partial D^{K}(\chi \beta_{j})}{\partial \bar{\theta}_{l}}\big|^{2}_{H',\,\omega}\right) e^{\kappa_{j}\circ \rho /2^{k+1}} dV_{\omega} \\
\leq & C \int_{U} (|D^{K}\overline{\partial} (\chi \beta_{j})|^{2}_{H', \,\omega} + |\vartheta D^{K}(\chi \beta_{j})|_{H', \,\omega}^{2})e^{\kappa_{j}\circ \rho/2^{k+1}}dV_{\omega} + C \int_{U} |D^{K}(\chi \beta_{j})|_{H',\, \omega}^{2}e^{\kappa_{j}\circ \rho/2^{k}} dV_{\omega}.  
\end{align*}
Note that $\overline{\partial} (\chi \beta_{j}) = \chi \alpha + \overline{\partial}\chi \wedge \beta_{j}$.  
The induction hypothesis and Lemma~\ref{lemma:5} show that the last term of the above inequality does not depend on $j$.  
\end{proof}

\section{Extension of closed $F$-valued forms in manifolds}\label{section:5}
In this section, we extend closed $F$-valued forms as in \cite{Tib18}.  

Let $X$ be a Stein manifold and let $\varphi$ be a non-constant plurisubharmonic function on $X$ such  that for every $r < \sup_{X}\varphi$ the sublevel set $\{z \in X\, |\, \varphi(z) < r\}$ is compact. 
Let $F$ be a holomorphic vector bundle over $X$.  
Let $\psi \in C^{\infty}(X)$ be an exhaustive strictly plurisubharmonic function.  
We define $D_{\varphi}(r) = \{z\in X\,|\, \varphi(z)<r\}$.

\begin{lemma}\label{lemma:6}
Let $U \subset X$ be an open neighborhood of $\mathrm{supp}\, i \partial \overline{\partial} \varphi$ 
and let $u \in \Omega^{(0, q-1)}(U, F)$ ($1 \leq q \leq n-2$) such that $\overline{\partial}u = 0$.  
Let $r < \sup \varphi$ such that $d\varphi \neq 0$ on $\partial D_{\varphi}(r) \setminus \mathrm{supp}\, i\partial \overline{\partial}\varphi$ (Note that $\varphi$ is smooth on $X \setminus \mathrm{supp}\, i\partial \overline{\partial} \varphi$).  
Then there exists $v \in \Omega^{(0, q-1)}(D_{\varphi}(r), F)$ such that $u = v$ on a neighborhood of $\mathrm{supp}\, i\partial \overline{\partial} \varphi \cap D_{\varphi}(r)$.  
\end{lemma}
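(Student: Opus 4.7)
The plan is to cut off $u$ by a smooth function and then correct the resulting $\overline{\partial}$-error by a solution furnished by Proposition~\ref{proposition:1}, arranging that correction to vanish in a neighborhood of $\mathrm{supp}\,i\partial\overline{\partial}\varphi$. To begin, choose an open neighborhood $U_{0}$ of $\mathrm{supp}\,i\partial\overline{\partial}\varphi \cap \overline{D_{\varphi}(r)}$ whose closure inside $\overline{D_{\varphi}(r)}$ is contained in $U$, together with a smooth cutoff $\chi \in C^{\infty}(X)$ of compact support in $U$ with $\chi \equiv 1$ on $U_{0}$. Extending $\chi u$ by zero outside $\mathrm{supp}\,\chi$ yields a smooth $F$-valued $(0, q-1)$-form on $D_{\varphi}(r)$, and $\alpha := \overline{\partial}(\chi u) = \overline{\partial}\chi \wedge u$ is a smooth $\overline{\partial}$-closed $(0, q)$-form whose support lies in $U \setminus U_{0}$, hence is disjoint from $\mathrm{supp}\,i\partial\overline{\partial}\varphi$.

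Next, I would apply Proposition~\ref{proposition:1} on $D := D_{\varphi}(r)$. In the role of ``$\varphi$'' there I would use $\varphi - r$, first applying a Richberg-type smoothing inside $D$ that preserves $\varphi$ on a neighborhood of $\partial D \setminus \mathrm{supp}\,i\partial\overline{\partial}\varphi$, which is legitimate since $\varphi$ is already pluriharmonic there and $d\varphi \neq 0$. In the role of ``$\eta$'' I would construct a negative plurisubharmonic $\eta \in C^{\infty}(\overline{D})$ satisfying $\eta > -e^{-1}$ on a neighborhood of $\mathrm{supp}\,i\partial\overline{\partial}\varphi \cap \overline{D}$ and $\eta \leq -1$ on $\mathrm{supp}\,\alpha$; such $\eta$ is produced by a plurisubharmonic separation argument using the Stein structure of $X$. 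With the associated $\rho$ defined as in Section~\ref{section:3}, we have $\mathrm{supp}\,\alpha \subset \{\rho = 0\}$, while $\{\rho > 1\}$ contains a neighborhood of $\mathrm{supp}\,i\partial\overline{\partial}\varphi$. Proposition~\ref{proposition:1} then produces a smooth $w \in \Omega^{(0, q-1)}(D, F)$ with $\overline{\partial}w = \alpha$ and $\mathrm{supp}\,w \subset \{\rho \leq 1\}$; in particular $w$ vanishes on a neighborhood of $\mathrm{supp}\,i\partial\overline{\partial}\varphi \cap D$.

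Setting $v := \chi u - w$, the form $v$ is smooth on $D_{\varphi}(r)$ with $\overline{\partial}v = \alpha - \alpha = 0$, and on the open set $U_{0} \cap \{w = 0\}$, a neighborhood of $\mathrm{supp}\,i\partial\overline{\partial}\varphi \cap D_{\varphi}(r)$, one has $\chi \equiv 1$ and $w \equiv 0$, hence $v = u$. The main obstacle is the joint construction of the auxiliary data for Proposition~\ref{proposition:1}: smoothing $\varphi$ on $\overline{D}$ without destroying the pluriharmonicity and non-vanishing gradient hypotheses on $\partial D \setminus \mathrm{supp}\,i\partial\overline{\partial}\varphi$, and producing $\eta$ with the required dichotomy (bounded away from $-\infty$ on $\mathrm{supp}\,i\partial\overline{\partial}\varphi \cap \overline{D}$, yet below $-1$ on the thin shell $\mathrm{supp}\,\overline{\partial}\chi$). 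The separation producing $\eta$ is the crux of the argument and relies on the flexibility of plurisubharmonic function theory on the Stein manifold $X$.
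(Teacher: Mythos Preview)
Your overall strategy---cut off $u$, solve $\overline{\partial}$ for the error with support control via Proposition~\ref{proposition:1}, then subtract---matches the paper's. The gap you yourself flag, the construction of $\eta$, is indeed the heart of the matter, and the paper resolves it differently from what you propose.

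The paper does \emph{not} attempt to produce an $\eta$ that is plurisubharmonic on all of $\overline{D_\varphi(r)}$, nor does it apply Proposition~\ref{proposition:1} on $D_\varphi(r)$ itself. Instead, Lemma~\ref{lemma:7} constructs a smooth plurisubharmonic $\varphi_\varepsilon$ by convolving $\varphi\circ\mu$ with a mollifier, where $\mu$ is a Docquier--Grauert holomorphic retraction of a neighborhood of $X\subset\mathbb{C}^m$; this $\varphi_\varepsilon$ agrees with $\varphi$ off a neighborhood of $\mathrm{supp}\,i\partial\overline{\partial}\varphi$ and satisfies $\varphi_\varepsilon>\varphi$ on that support. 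Setting $\eta=\varphi_\varepsilon-\varphi$ gives $\eta=0$ on $\mathrm{supp}\,\alpha$ and $\eta>c>0$ on $\mathrm{supp}\,i\partial\overline{\partial}\varphi\cap\overline{D_\varphi(r)}$. Crucially, this $\eta$ is plurisubharmonic only where $\varphi$ is pluriharmonic, i.e.\ away from the support. Proposition~\ref{proposition:1} is then applied on the \emph{smaller} domain $D'=\{\varphi<r,\ \eta<c/2\}$, which lies entirely in $D_\varphi(r)\setminus\mathrm{supp}\,i\partial\overline{\partial}\varphi$; there $\varphi-r$ and $\eta-c/2$ are smooth and negative plurisubharmonic, $\partial'D'\subset\partial D_\varphi(r)\setminus\mathrm{supp}\,i\partial\overline{\partial}\varphi$ so the pluriharmonicity and $d\varphi\neq 0$ hypotheses hold, and $\mathrm{supp}\,\alpha\subset\{\eta=0\}$. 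The resulting $\beta$ is supported in $\{\eta\le c'\}$ with $c'<c/2$, hence extends by zero across $\{\eta=c/2\}\subset\partial D'$ to a smooth form on $D_\varphi(r)$.

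Your plan to work directly on $D_\varphi(r)$ runs into two obstacles that this maneuver sidesteps. First, your $\eta$ must be plurisubharmonic on all of $\overline{D_\varphi(r)}$ while being larger on $\mathrm{supp}\,i\partial\overline{\partial}\varphi$ than on the shell $\mathrm{supp}\,\overline{\partial}\chi$; a generic ``plurisubharmonic separation'' cannot guarantee this, since nothing prevents the shell from containing pieces of the support in its plurisubharmonic hull. Second, Proposition~\ref{proposition:1} requires pluriharmonicity of its ``$\varphi$'' and $d\varphi\neq 0$ near $\partial'D$; with your choice $D=D_\varphi(r)$ and ``$\varphi$'' $=\varphi-r$, one has $\partial'D=\partial D_\varphi(r)$, and near $\partial D_\varphi(r)\cap\mathrm{supp}\,i\partial\overline{\partial}\varphi$ no Richberg smoothing will render $\varphi$ pluriharmonic. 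Passing to $D'$ removes both issues at once; the explicit construction of Lemma~\ref{lemma:7} is what makes that passage possible.
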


\begin{proof}
We take $\chi \in C^{\infty}(X)$ such that $0 \leq \chi \leq 1$, $\chi = 1$ on a neighborhood of $\mathrm{supp}\, i\partial \overline{\partial} \varphi$ and that $\mathrm{supp}\, \chi \subset U$.  
Then $\alpha := \overline{\partial} (\chi u)$ is $\overline{\partial}$-closed.  
By Lemma~\ref{lemma:7} below, there exists a smooth plurisubharmonic function $\varphi_{\varepsilon}$ on a neighborhood of $\overline{D_{\varphi}(r)}$ such that $\varphi_{\varepsilon} \geq \varphi$ on $\overline{D_{\varphi}(r)}$, $\varphi_{\varepsilon} > \varphi$ on $\mathrm{supp}\, i\partial \overline{\partial} \varphi \cap \overline{D_{\varphi}(r)}$, and that $\varphi_{\varepsilon} = \varphi$ on $\mathrm{supp}\, \alpha$.  

Let $\eta = \varphi_{\varepsilon}- \varphi$.  
It follows that $\eta$ is plurisubharmonic on $D_{\varphi}(r) \setminus \mathrm{supp}\, i \partial \overline{\partial} \varphi$ and that $\eta = 0$ on $\mathrm{supp}\, \alpha$.  
Since $\eta$ is lower semi-continuous on $\overline{D_{\varphi}(r)}$, 
there exists $c > 0$ such that $\eta > c$ on $\overline{D_{\varphi}(r)} \cap \mathrm{supp}\, i \partial \overline{\partial} \varphi$.  
Let $D' = \{z \in D \,|\, \varphi(z) -r< 0, \eta(z) - c/2 < 0 \}$.  
Note $\eta \in C^{\infty}(\overline{D'})$ since $\varphi \in C^{\infty}(D \setminus \mathrm{supp}\, i\partial \overline{\partial} \varphi)$.
By Proposition~\ref{proposition:1}, there exist $c' < \frac{c}{2}$ and $\beta \in \Omega^{(0, q-1)}(D', F)$ such that $\overline{\partial} \beta = \alpha$ and that $\mathrm{supp}\, \beta \subset \{z \in D'\, |\, \eta(z) \leq c' \}$.  
(If $D'$ is a disjoint union of bounded pseudoconvex domains, we apply Proposition~\ref{proposition:1} to each component.)  
We extend $\beta$ by $0$ on $D_{\varphi}(r) \setminus D'$, and 
we consider $\beta$ as an element of $\Omega^{(0, q-1)}(D_{\varphi}(r), F)$.  
We have that $\overline{\partial}\beta = \alpha$ on $D_{\varphi}(r)$.  
Then $v = \chi u - \beta$ is the form we are looking for.  
\end{proof}

\begin{lemma}\label{lemma:7}
Let $V \subset X$ be an open neighborhood of $\mathrm{supp}\, i\partial \overline{\partial} \varphi$.  
Let $t < \sup \varphi$.  
There exists a smooth plurisubharmonic function $\varphi_{\varepsilon}$ on $D_{\varphi}(t)$ such that $\varphi_{\varepsilon} \geq \varphi$ on $D_{\varphi}(t)$, $\varphi_{\varepsilon} > \varphi$ on $\mathrm{supp}\, i\partial \overline{\partial} \varphi$, and that $\varphi_{\varepsilon} = \varphi$ on $D_{\varphi}(t) \setminus V$.  
\end{lemma}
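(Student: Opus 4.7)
The plan is to exploit the dichotomy that $\varphi$ is pluriharmonic, hence $C^\infty$, on $X\setminus\mathrm{supp}\,i\partial\overline{\partial}\varphi$, and to bump $\varphi$ up by a small smooth plurisubharmonic modification supported in $V$ while gluing back to $\varphi$ outside $V$ via a regularized max (Chapter~I, Section~5 of \cite{Dem}). I first pick nested open sets
\[
\mathrm{supp}\,i\partial\overline{\partial}\varphi \subset W_0 \subset \overline{W_0} \subset W_1 \subset \overline{W_1} \subset V
\]
with $\overline{W_1}\cap\overline{D_{\varphi}(t)}$ compact.

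The first step is to produce, for a small parameter $\delta>0$, a smooth plurisubharmonic function $\varphi_\delta$ on an open neighborhood $U\subset V$ of $\overline{W_1}\cap\overline{D_{\varphi}(t)}$ satisfying: (a) $\varphi_\delta\geq\varphi$ on $U$; (b) $\varphi_\delta\leq\varphi+\tau$ on $U$ with $\tau=\tau(\delta)\to 0$, by Demailly's regularization on the Stein manifold $X$; (c) there exists $\sigma=\sigma(\delta)>0$ with $\varphi_\delta-\varphi\geq\sigma$ on $\mathrm{supp}\,i\partial\overline{\partial}\varphi\cap\overline{W_0}\cap\overline{D_{\varphi}(t)}$. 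Property (c) is the key quantitative input: at each point of the support, $\varphi$ is not pluriharmonic in any neighborhood, so a radial-convolution or Bergman-kernel smoothing of $\varphi$ at scale $\delta$ strictly exceeds $\varphi$ there (by failure of the mean-value equality that characterizes pluriharmonic functions), and compactness together with lower semicontinuity of $\varphi_\delta-\varphi$ promote the pointwise strict inequality to a uniform positive lower bound; on the pluriharmonic collar one moreover has $\tau(\delta)/\sigma(\delta)\to 0$ as $\delta\to 0$.

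Next I glue via the regularized max. Choose constants $0<\eta<c<\sigma-\eta$ with $\tau<c-\eta$ (achievable for $\delta$ small), and set
\[
\varphi_\varepsilon := \max_{\eta}\{\varphi,\ \varphi_\delta-c\} \ \text{on}\ U,\qquad \varphi_\varepsilon := \varphi \ \text{on}\ D_{\varphi}(t)\setminus W_1.
\]
On $U$ away from a neighborhood of the support one has $\varphi_\delta-c\leq\varphi+\tau-c<\varphi-\eta$, so the regularized max equals $\varphi$; this makes the two-piece definition consistent and smooth across $\partial W_1$. On $\mathrm{supp}\,i\partial\overline{\partial}\varphi\cap D_{\varphi}(t)$ one has $\varphi_\delta-c\geq\varphi+\sigma-c>\varphi+\eta$, so the regularized max equals $\varphi_\delta-c>\varphi$. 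The standard properties of the regularized max then yield that $\varphi_\varepsilon$ is smooth plurisubharmonic on $D_{\varphi}(t)$, and the three required properties follow by construction.

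The main obstacle is producing $\varphi_\delta$ with both the collar bound (b) and the strict lower bound (c) holding at once. This is possible because the obstruction to $\varphi_\delta=\varphi$ under radial smoothing is exactly the positive complex Hessian $i\partial\overline{\partial}\varphi$: at pluriharmonic points, the mean-value equality forces $\varphi_\delta-\varphi=0$ pointwise (up to an arbitrarily small global patching error from combining charts), while at points of $\mathrm{supp}\,i\partial\overline{\partial}\varphi$ the excess is bounded below by the mass of the Hessian in a $\delta$-ball and is uniform on compacta. One realizes $\varphi_\delta$ either via Demailly's Bergman-kernel regularization on the Stein manifold $X$, or equivalently via local convolutions in a finite holomorphic chart cover of $\overline{W_1}\cap\overline{D_{\varphi}(t)}$ patched by a further regularized max.
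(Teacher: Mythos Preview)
Your approach via a regularized-max gluing is different from the paper's and can be made to work, but as written it contains an inconsistency. Condition (b) asserts $\varphi_\delta \leq \varphi + \tau$ on all of $U$; since $U$ contains points of the support where (c) forces $\varphi_\delta \geq \varphi + \sigma$, this yields $\tau \geq \sigma$ always, flatly contradicting your claim $\tau(\delta)/\sigma(\delta)\to 0$. What the gluing step actually uses is the bound (b) only on a collar near $\partial W_1$, where $\varphi$ is pluriharmonic; there a local radial convolution in a holomorphic chart gives $\varphi_\delta = \varphi$ exactly (a plurisubharmonic function with the mean-value property is harmonic, and a positive $(1,1)$-current with vanishing trace is zero, hence the function is pluriharmonic), so the collar excess is only the patching error, which can be made small relative to $\sigma$ by fixing $\delta$ first and then choosing the patching parameter. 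With (b) restated on the collar rather than on $U$, your argument goes through; invoking Demailly's Bergman-kernel regularization instead is less transparent, since that procedure does not obviously reproduce $\varphi$ exactly on pluriharmonic loci.

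The paper's proof is shorter and bypasses the gluing entirely. Since $X$ is Stein, the Docquier--Grauert theorem furnishes an embedding $X \hookrightarrow \mathbb{C}^m$ together with a holomorphic retraction $\mu: W \to X$ from a tubular neighborhood, and one simply sets $\varphi_\varepsilon = \big((\varphi \circ \mu) * h_\varepsilon\big)\big|_X$ for a single radial mollifier $h_\varepsilon$ on $\mathbb{C}^m$. Because $\mu$ is holomorphic, $\varphi \circ \mu$ is plurisubharmonic on $W$ and pluriharmonic precisely on $\mu^{-1}(X \setminus \mathrm{supp}\,i\partial\overline{\partial}\varphi)$; for $\varepsilon$ small the $\varepsilon$-ball about any point of $W' \setminus \mu^{-1}(V)$ misses $\mu^{-1}(\mathrm{supp}\,i\partial\overline{\partial}\varphi)$, so the convolution equals $\varphi\circ\mu$ there on the nose, while on the support it strictly exceeds $\varphi$. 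No regularized max, no collar estimate, and no comparison of two small parameters is needed: the holomorphic retraction supplies a single global ambient chart in which one convolution already has all three required properties.
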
 
\begin{proof}
Since $X$ is Stein, we may assume that $X$ is a submanifold of $\mathbb{C}^{m}$.  
By the theorem of Docquier and Grauert, there exists an open neighborhood $W \subset \mathbb{C}^{m}$ of $X$ and a holomorphic retraction $\mu: W \to X$ (cf.\,Chapter VIII of \cite{Gun-Ros}).  
Let $h: \mathbb{C}^{m} \to \mathbb{R}^{+}$ be a smooth function depending only on $|z|$ ($z \in \mathbb{C}^{m}$) whose support is contained in the unit ball and whose integral is equal to one.  
Define $h_{\varepsilon}(z) = (1/\varepsilon^{2m})h(z/\varepsilon)$ for $\varepsilon>0$.  
Let $W' \subset W$ be a relatively compact open neighborhood of $D_{\varphi}(t)$.  
If $\varepsilon > 0$ is sufficiently small, we can define $(\varphi \circ \mu)_{\varepsilon} = (\varphi \circ \mu) * h_{\varepsilon}$ on $W'$.  
Put $\varphi_{\varepsilon} = (\varphi \circ \mu)_{\varepsilon}$ on $D_{\varphi}(t)$.  
Then we have $\varphi_{\varepsilon} \geq \varphi$ on $D_{\varphi}(t)$ and $\varphi_{\varepsilon} > \varphi$ on $\mathrm{supp}\, i\partial \overline{\partial}\varphi$.  
Since $\mu^{-1}(\mathrm{supp}\, i\partial \overline{\partial}\varphi) \cap (W' \setminus \mu^{-1}(V)) = \emptyset$, 
there exists small $\varepsilon > 0$ such that the ball of radius $\varepsilon$ centered at every point of $W' \setminus \mu^{-1}(V)$ does not intersect $\mu^{-1}(\mathrm{supp}\, i\partial \overline{\partial} \varphi)$.  
Then $(\varphi \circ \mu)_{\varepsilon} = \varphi \circ \mu$ on $W' \setminus \mu^{-1}(V)$.  
\end{proof}

\begin{lemma}\label{lemma:a}
Let $\varphi$ be a plurisubharmonic function on $X$ as in Theorem~\ref{theorem:1}.  
Let  $r < s < t < \sup_{X}\varphi$ such that $D_{\varphi}(r) \subset \subset D_{\varphi}(s) \subset \subset D_{\varphi}(t)$.    
Then there exists a plurisubharmonic function $\phi$ on $X$ which satisfies the following conditions:  
\begin{itemize}
\item[(a)]
$\varphi = \phi$ on $\{z \in D\,|\, \varphi(z) \geq t\}$.  
\item[(b)]
$\mathrm{supp}\, i\partial \overline{\partial} \varphi \cup \overline{D_{\varphi}(r)} \subset \mathrm{supp}\, i\partial \overline{\partial} \phi$.  
\end{itemize}
\end{lemma}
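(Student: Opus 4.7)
The approach is to form $\phi$ by replacing $\varphi$, in an open neighborhood of $\overline{D_{\varphi}(\tau)}$ for some intermediate level $\tau\in(s,t)$, with the regularized maximum $\max_{\varepsilon_{0}}(\varphi,u)$ of $\varphi$ against a smooth strictly plurisubharmonic bump $u=\epsilon\psi+B$. The constants $\epsilon>0$ and $B$ will be tuned so that $u$ strictly exceeds $\varphi$ on $\overline{D_{\varphi}(r)}$ but is dominated by $\varphi-\varepsilon_{0}$ on an outer shell $\overline{D_{\varphi}(\tau+\delta)}\setminus\overline{D_{\varphi}(\tau)}$; then the two pieces glue to a plurisubharmonic function that agrees with $\varphi$ outside $\overline{D_{\varphi}(\tau)}$ and is strictly plurisubharmonic on a neighborhood of $\overline{D_{\varphi}(r)}$.

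Concretely, I would first fix $\tau$ with $s<\tau<t$, a small $\delta>0$ so that $\overline{D_{\varphi}(\tau+\delta)}$ remains compact, and a smooth strictly plurisubharmonic function $\psi$ on $X$ (available because $X$ is Stein). Setting $m:=\min_{\overline{D_{\varphi}(r)}}\psi$ and $M:=\max_{\overline{D_{\varphi}(\tau+\delta)}}\psi$ (finite by compactness, with $M\geq m$ by the maximum principle for $\psi$), I would choose $\varepsilon_{0}>0$ with $4\varepsilon_{0}<\tau-r$, and then $\epsilon>0$ so small that $\epsilon(M-m)<\tau-r-4\varepsilon_{0}$. The interval $(r+2\varepsilon_{0}-\epsilon m,\ \tau-2\varepsilon_{0}-\epsilon M)$ is then nonempty; picking $B$ in it and setting $u:=\epsilon\psi+B$ yields $u>\varphi+2\varepsilon_{0}$ on $\overline{D_{\varphi}(r)}$ (using $\psi\geq m$ and $\varphi\leq r$ there) and $u<\tau-2\varepsilon_{0}\leq\varphi-\varepsilon_{0}$ on $\overline{D_{\varphi}(\tau+\delta)}\setminus\overline{D_{\varphi}(\tau)}$ (using $\psi\leq M$ and $\varphi\geq\tau$).

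I then define $\phi:=\max_{\varepsilon_{0}}(\varphi,u)$ on the open set $\Omega:=D_{\varphi}(\tau+\delta)$, and $\phi:=\varphi$ on $X\setminus\overline{D_{\varphi}(\tau)}$. On the overlap $\Omega\setminus\overline{D_{\varphi}(\tau)}$ the inequality $u<\varphi-\varepsilon_{0}$ forces $\max_{\varepsilon_{0}}(\varphi,u)=\varphi$, so the two pieces agree and $\phi$ is plurisubharmonic on $X$. Property (a) is immediate because $\{\varphi\geq t\}\subset X\setminus\overline{D_{\varphi}(\tau)}$ (since $t>\tau$). For property (b), on a neighborhood of $\overline{D_{\varphi}(r)}$ one has $\phi=u$, which is strictly plurisubharmonic, so $\overline{D_{\varphi}(r)}\subset\mathrm{supp}\,i\partial\overline{\partial}\phi$; on any open set where $\phi=\varphi$, the currents $i\partial\overline{\partial}\phi$ and $i\partial\overline{\partial}\varphi$ agree, and wherever the smoothed maximum assigns positive weight to $u$, the strict positivity of $i\partial\overline{\partial}u$ forces $i\partial\overline{\partial}\phi>0$; a short case analysis yields the remaining inclusion $\mathrm{supp}\,i\partial\overline{\partial}\varphi\subset\mathrm{supp}\,i\partial\overline{\partial}\phi$.

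The main obstacle is the plurisubharmonic maximum principle: for any psh function on $X$ the supremum over $\overline{D_{\varphi}(\tau)}$ is attained on the boundary $\{\varphi=\tau\}$, so one cannot compare $u$ to a constant and have $u$ exceed that constant on $\overline{D_{\varphi}(r)}$ while lying below it on $\{\varphi=\tau\}$. The way around this is to compare $u$ with $\varphi$ itself (which takes values $\leq r$ on $\overline{D_{\varphi}(r)}$ and $=\tau$ on $\{\varphi=\tau\}$); the positive gap $\tau-r$ between these two levels allows the oscillation $\epsilon(M-m)$ of the bump to be absorbed, provided $\epsilon$ is chosen small enough.
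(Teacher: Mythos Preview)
Your argument is correct and follows essentially the same route as the paper: form $\phi$ as the (regularized) maximum of $\varphi$ with an affine rescaling $u=\epsilon\psi+B$ of a strictly plurisubharmonic exhaustion, with constants chosen so that $u$ dominates $\varphi$ on $\overline{D_\varphi(r)}$ and is dominated by $\varphi$ near an outer level set, then glue. The paper does this with the ordinary $\max$ and the specific choice $\tau(z)=r+\frac{(s-r)\psi(z)}{M+1}$ (where $M=\sup_{\overline{D_\varphi(t)}}\psi$ and $\psi>0$), which already lies strictly between $r$ and $s$ on $\overline{D_\varphi(t)}$; your use of $\max_{\varepsilon_0}$ and the extra parameters $\tau,\delta,\varepsilon_0$ is harmless but unnecessary, since no smoothness of $\phi$ is required.
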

\begin{proof}
Let $\psi$ be an exhaustive strongly plurisubharmonic function on $X$.   
We may assume that $\psi > 0$ on $X$.  
Let $M = \sup_{\overline{D_{\varphi}(t)}} \psi$ and 
let $\tau(z) = r + \frac{(s-r)\psi}{M +1}$.  
Then $r < \tau < s$ on the closure of $D_{\varphi}(t)$.  
We define 
\begin{equation*}
\phi(z) = \left\{  
\begin{aligned}
&\varphi(z) & \text{if $z \in \{x \in X\, |\, \varphi(x) \geq t$\}} \\
&\max\{\varphi(z), \tau(z)\} & \text{if $z \in D_{\varphi}(t)$}\qquad \qquad \quad \,\,\,
\end{aligned}
\right.
\end{equation*}
Then $\phi$ is a plurisubharmonic function on $X$.   
It is easy to see that $\phi$ satisfies the conditions of the lemma.  
\end{proof}
\begin{lemma}\label{lemma:b}
Let $X$ and $\varphi$ be as in Theorem~\ref{theorem:1}.  
Let $U \subset X$ be an open neighborhood of $\mathrm{supp}\, i \partial \overline{\partial} \varphi$ and let $u \in \Omega^{(0, q-1)}(U, F)$ ($1 \leq q \leq n-2$) such that $\overline{\partial} u = 0$.  
Let $s_{1} < s_{2} < \cdots < \sup_{X}\varphi$ such that $\lim_{j \to \infty}s_{j} = \sup_{D} \varphi$ and that 
\[
D_{\varphi}(s_{1}) \subset \subset D_{\varphi}(s_{2}) \subset \subset \cdots \subset \subset X.
\]  
Then there exist $V_{j} \subset X$ and $v_{j} \in \Omega^{(0, q-1)}(V_{j}, F)$ ($j = 3, 4, \ldots$) which satisfy the following:  
\begin{itemize}
\item[(i)]
$V_{j}$ is an open neighborhood of $\mathrm{supp}\, i \partial \overline{\partial} \varphi \cap \overline{D_{\varphi}(s_{j})}$.  
\item[(ii)]
$\overline{\partial}v_{j} = 0$ on $V_{j}$.  
\item[(iii)]
$v_{j + 1} = v_{j}$ on $D_{\varphi}(s_{j-2})$.   
\item[(iv)]
$v_{j} = u$ on a neighborhood of $\mathrm{supp}\, i \partial \overline{\partial} \varphi$.  
\end{itemize}
\end{lemma}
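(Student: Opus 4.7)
The plan is to construct $\{v_j\}_{j \geq 3}$ by induction on $j$. For each $j$, I fix by Sard's theorem a regular value $\lambda_j \in (s_j, s_{j+1})$ of $\varphi$ on $X \setminus \mathrm{supp}\, i\partial\overline{\partial}\varphi$, and pick a decreasing sequence of open neighborhoods $U'_j \subset U$ of $\mathrm{supp}\, i\partial\overline{\partial}\varphi$; set $V_j = D_\varphi(\lambda_j) \cup U'_j$. Since $\overline{D_\varphi(s_j)} \subset D_\varphi(\lambda_j)$, this automatically satisfies~(i). The form $v_j$ will be defined piecewise: equal to a form produced by Lemma~\ref{lemma:6} on $D_\varphi(\lambda_j)$ and equal to $u$ on $U'_j$, with $U'_j$ shrunk so that the two definitions agree on the overlap $D_\varphi(\lambda_j) \cap U'_j$.

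For the base case $j = 3$, apply Lemma~\ref{lemma:6} directly to $\varphi$ and $u$ at level $\lambda_3$ to produce $\widetilde{v}_3 \in \Omega^{(0,q-1)}(D_\varphi(\lambda_3), F)$ with $\overline{\partial}\widetilde{v}_3 = 0$ and $\widetilde{v}_3 = u$ on a neighborhood $W_3$ of $\mathrm{supp}\, i\partial\overline{\partial}\varphi \cap D_\varphi(\lambda_3)$; then shrink $U'_3$ so that $U'_3 \cap D_\varphi(\lambda_3) \subset W_3$ and paste. For the inductive step constructing $v_{j+1}$ from $v_j$, the key idea is to enlarge the Levi support of $\varphi$ using Lemma~\ref{lemma:a}: take $r = s_{j-1}$, an intermediate $s \in (s_{j-1}, s_j)$, and $t = t_{j+1} \in (s_j, \lambda_j)$ to obtain a plurisubharmonic $\phi_{j+1}$ with $\phi_{j+1} = \varphi$ on $\{\varphi \geq t_{j+1}\}$ and
\[
\mathrm{supp}\, i\partial\overline{\partial}\varphi \cup \overline{D_\varphi(s_{j-1})} \subset \mathrm{supp}\, i\partial\overline{\partial}\phi_{j+1}.
\]
The first property yields $D_{\phi_{j+1}}(\mu) = D_\varphi(\mu)$ for $\mu \geq t_{j+1}$, and $\mathrm{supp}\, i\partial\overline{\partial}\phi_{j+1} \subset \overline{D_\varphi(t_{j+1})} \cup \mathrm{supp}\, i\partial\overline{\partial}\varphi \subset V_j$ (using $t_{j+1} < \lambda_j$ and $U'_j \supset \mathrm{supp}\, i\partial\overline{\partial}\varphi$). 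Hence $v_j$ is a smooth $\overline{\partial}$-closed form defined on a neighborhood of $\mathrm{supp}\, i\partial\overline{\partial}\phi_{j+1}$. Applying Lemma~\ref{lemma:6} to $\phi_{j+1}$ and $v_j$ at a regular level $\lambda_{j+1} \in (s_{j+1}, s_{j+2})$ (regularity in the sense required by Lemma~\ref{lemma:6}, which reduces to a Sard condition for $\varphi$ since $\phi_{j+1} = \varphi$ near the boundary) produces $\widetilde{v}_{j+1} \in \Omega^{(0,q-1)}(D_\varphi(\lambda_{j+1}), F)$ with $\overline{\partial}\widetilde{v}_{j+1} = 0$ and $\widetilde{v}_{j+1} = v_j$ on a neighborhood of $\mathrm{supp}\, i\partial\overline{\partial}\phi_{j+1} \cap D_\varphi(\lambda_{j+1}) \supset \overline{D_\varphi(s_{j-1})}$. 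After shrinking $U'_{j+1} \subset U'_j$ so that $U'_{j+1} \cap D_\varphi(\lambda_{j+1})$ lies inside this agreement set (where $\widetilde{v}_{j+1} = v_j = u$), set $v_{j+1} = \widetilde{v}_{j+1}$ on $D_\varphi(\lambda_{j+1})$ and $v_{j+1} = u$ on $U'_{j+1}$.

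Conditions (ii)--(iv) then follow at once: (ii) from the two local formulas; (iii) because $v_{j+1} = v_j$ on a neighborhood of $\overline{D_\varphi(s_{j-1})}$, which contains $D_\varphi(s_{j-2})$; (iv) because $v_{j+1} = u$ on $U'_{j+1}$, itself a neighborhood of $\mathrm{supp}\, i\partial\overline{\partial}\varphi$. The main obstacle is precisely the inclusion $\mathrm{supp}\, i\partial\overline{\partial}\phi_{j+1} \subset V_j$ that makes $v_j$ an admissible input to Lemma~\ref{lemma:6} at the next step; it forces the choice $t_{j+1} < \lambda_j$, and is ultimately responsible for the two-level gap appearing in condition~(iii).
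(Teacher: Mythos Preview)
Your argument is correct and follows essentially the same route as the paper: an induction using Lemma~\ref{lemma:a} to enlarge the Levi support so that $v_j$ becomes an admissible input for Lemma~\ref{lemma:6} at the next scale, followed by gluing the extended form with $u$ near $\mathrm{supp}\, i\partial\overline{\partial}\varphi$. The only differences are cosmetic: you work with auxiliary regular levels $\lambda_j\in(s_j,s_{j+1})$ chosen by Sard and an explicit decomposition $V_j=D_\varphi(\lambda_j)\cup U'_j$, and you invoke Lemma~\ref{lemma:a} with $r=s_{j-1}$ rather than $r=s_{j-2}$; the paper is terser and uses the $s_j$ themselves, but the mechanism is identical.
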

\begin{proof}
By Lemma~\ref{lemma:6}, there exists $v \in \Omega^{(0, q-1)}(D_{\varphi}(s_{5}), F)$ such that $\overline{\partial}v = 0$ and that $v = u$ on a neighborhood of $\mathrm{supp}\, i \partial \overline{\partial} \varphi \cap D_{\varphi}(s_{5})$.  
We may assume that $U$ is sufficiently small. 
Then $v$ on $D_{\varphi}(s_{4})$ and $u$ on $U$ can be glued together to give the form $v_{3}$ on $V_{3} := U \cup D_{\varphi}(s_{4})$.  

Assume that there exist $V_{j}$ and $v_{j}$ which satisfy the condition of the lemma ($j \geq 3$).  
By Lemma~\ref{lemma:a}, there exists plurisubharmonic function $\phi$ on $X$ such that $\varphi = \phi$ on $\{z \in D \, |\, \varphi(z) \geq s_{j}\}$ and that $\mathrm{supp}\, i\partial \overline{\partial} \varphi \cup \overline{D_{\varphi}(s_{j-2})} \subset \mathrm{supp}\, i \partial \overline{\partial} \phi$.  
Then $v_{j}$ is defined on a neighborhood of $\mathrm{supp}\, i\partial \overline{\partial} \phi$.  
By Lemma~\ref{lemma:6}, there exists $\tilde{v} \in \Omega^{(0, q-1)}(D_{\varphi}(s_{j+3}), F)$ such that $\overline{\partial}\tilde{v} = 0$ and that $\tilde{v} = v_{j}$ on an open neighborhood of $D_{\varphi}(s_{j+3}) \cap \mathrm{supp}\, i\partial \overline{\partial} \phi$.  
As in the case of $v_{3}$, we can glue $\tilde{v}$ and $v_{j}$ together, and we obtain $v_{j+1}$ and $V_{j+1}$ which satisfy the conditions of the lemma.  
Hence we obtain $V_{k}$ and $v_{k}$ ($k = 3, 4, \ldots$) inductively.  
\end{proof}

Now we prove that the natural map 
\[
H^{0}(X, F) \to \underset{\mathrm{supp}\, i \partial \overline{\partial}\varphi \subset V}{\varinjlim}H^{0}(V, F).  
\]
is an isomorphism and 
\[
\underset{\mathrm{supp}\, i \partial \overline{\partial}\varphi \subset V}{\varinjlim}H^{q}(V, F) = 0.  
\]
for $0< q < n-2$.  
\begin{proof}
Let $U$ be an open neighborhood of $\mathrm{supp}\, i \partial \overline{\partial} \varphi$ in $X$.  
Let $u \in \Omega^{(0, q-1)}(U, F)$ ($1 \leq q \leq n-2$) such that $\overline{\partial} u = 0$.  
Take $\{s_{j}\}$ and $\{v_{j}\}$ as in Lemma~\ref{lemma:5}.  
We can define $v \in \Omega^{(0, q-1)}(X, F)$ by $v(z) = v_{j+2}(z)$ when $z \in D_{\varphi}(s_{j})$.  
Then $\overline{\partial} v = 0$ and $v$ is equal to $u$ on a neighborhood of $\mathrm{supp}\, i \partial \overline{\partial}\varphi$.   
Hence the natural map $H^{q-1}(X, F) \to \underset{\mathrm{supp}\, i\partial \overline{\partial} \varphi \subset V}\varinjlim H^{q-1}(V, F)$ is surjective.  
Since $H^{q-1}(X, F) = 0$ for $q \geq 2$, this completes the proof.  
\end{proof}

Let $X$ be a projective manifold and let $T$ be a closed positive current of type $(1,1)$ on $X$ such that the cohomology class $\{T\}$ belongs to $\mathcal{K}_{NS}$.   
There exist very ample line bundles $L_{1}, \ldots, L_{p}$ and positive numbers $a_{1}, \ldots, a_{p}$ such that $\{T\} = a_{1}c_{1}(L_{1}) + \cdots + a_{p}c_{1}(L_{p})$ where $c_{1}(L_{j})$ is the first Chern class of $L_{j}$.  
Let $\omega_{j}$ be a smooth closed positive form such that $\omega_{j} \in c_{1}(L_{j})$.  
Put $\omega = a_{1}\omega_{1} + \cdots a_{p}\omega_{p}$.  
Then there exists an almost plurisubharmonic function $\varphi$ on $X$ such that $T = \omega + i \partial \overline{\partial} \varphi$ (see Section~14 of \cite{Dem2}).   
Here we say that a function $\varphi$ is almost plurisubharmonic if, for any $x \in X$, there exists a smooth function $\psi$ on a neighborhood of $x$ such that $\varphi + \psi$ is a plurisubharmonic function.  
Then we have that points where $\varphi$ is not continuous belong to $\mathrm{supp}\, T$.  
(We note that this claim needs not hold if $\varphi$ is not almost plurisubharmonic, 
even when $\varphi$ is a locally integrable upper-semicontinuous function.)

First we assume that $\varphi$ is bounded on $X$.  
Take non-zero holomorphic sections $s_{1} \in \Gamma(X, L_{1}), \ldots, s_{p} \in \Gamma(X, L_{p})$.  
Define $s = s_{1} \otimes \cdots \otimes s_{p} \in \Gamma(X, L_{1} \otimes \cdots \otimes L_{p})$.  
\begin{lemma}\label{lemma:8}
Let $U \subset X$ be an open neighborhood of $\mathrm{supp}\, T = \mathrm{supp}\, (\omega + i\partial \overline{\partial} \varphi)$.  
Here $\varphi$ is a bounded almost plurisubharmonic function.  
Let $u \in \Omega^{(0, q-1)}(U, F)$ ($1 \leq q \leq n-2$) such that $\overline{\partial} u = 0$.  
Let $K \subset X \setminus \{z \in X\, |\, s(z) = 0\}$ be a compact set.  
Then there exist an open neighborhood $U_{1} \subset X$ of $K \cup \mathrm{supp}\, (\omega + i \partial \overline{\partial} \varphi)$, 
a $\overline{\partial}$-closed $F$-valued form $u_{1} \in \Omega^{(0, q-1)}(U_{1}, F)$ and a bounded almost plurisubharmonic function $\varphi_{1}$ on $X$ which satisfy the following conditions:  
\begin{itemize}
\item[(a)]
$u = u_{1}$ on an open neighborhood of $\mathrm{supp}\, (\omega + i \partial \overline{\partial} \varphi)$, 
\item[(b)]
$\omega + i\partial \overline{\partial} \varphi_{1} \geq 0$, 
\item[(c)]
$K \cup \mathrm{supp}\,(\omega + i\partial \overline{\partial} \varphi) \subset \mathrm{supp}\, (\omega + i\partial \overline{\partial} \varphi_{1}) \subset U_{1}$.  
\end{itemize}
\end{lemma}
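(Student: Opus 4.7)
The plan is to construct $\varphi_{1}$ and $u_{1}$ separately, both using that $X \setminus \{s=0\}$ is a Stein manifold on which $\omega$ has the plurisubharmonic potential $\phi := -\sum_{j=1}^{p} a_{j} \log|s_{j}|^{2}_{h_{j}}$, satisfying $i\partial\overline{\partial}\phi = \omega$ on $X \setminus \{s=0\}$ and $\phi \to +\infty$ at $\{s=0\}$. In particular $\tilde{\varphi} := \varphi + \phi$ is an exhaustive plurisubharmonic function on $X \setminus \{s=0\}$ with $i\partial\overline{\partial}\tilde{\varphi} = T$ there, so $\mathrm{supp}\,i\partial\overline{\partial}\tilde{\varphi} = \mathrm{supp}\,T \cap (X \setminus \{s=0\})$.

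For $\varphi_{1}$, I would take the regularized maximum $\varphi_{1} := \max_{\varepsilon}(\varphi, -\phi + c)$ of two $\omega$-plurisubharmonic functions; here $-\phi$ is $\omega$-plurisubharmonic because $\omega + i\partial\overline{\partial}(-\phi)$ equals a positive multiple of the current of integration on $\{s=0\}$ by Lelong--Poincar\'e. The constant $c$ and the regularization parameter $\varepsilon$ are chosen, using that $\varphi + \phi$ is bounded on the compact set $K \subset X \setminus \{s=0\}$ while tending to $+\infty$ at $\{s=0\}$, so that the transition region $\{|\varphi + \phi - c| \leq \varepsilon\}$ covers $K$ and lies in a prescribed open neighborhood of $K$. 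Then $\varphi_{1}$ is bounded and $\omega$-plurisubharmonic, yielding (b); where $\varphi + \phi > c + \varepsilon$ (containing $\mathrm{supp}\,T$) one has $\varphi_{1} = \varphi$ and $T_{1} = T$; where $\varphi + \phi < c - \varepsilon$ (a subset of $X \setminus \{s=0\}$) one has $\varphi_{1} = -\phi + c$ and $T_{1} = 0$; in the transition region $T_{1}$ is a smooth nonnegative form whose support covers the transition region, in particular $K$. Taking $U_{1}$ to be an appropriate open neighborhood of $\mathrm{supp}\,T$ together with the transition region gives (c).

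For $u_{1}$, I apply the Stein-manifold extension procedure of Lemma~\ref{lemma:b} to $X \setminus \{s=0\}$ with plurisubharmonic function $\tilde{\varphi}$ and input form $u|_{U \cap (X \setminus \{s=0\})}$, obtaining a $\overline{\partial}$-closed form $\tilde{u} \in \Omega^{(0,q-1)}(X \setminus \{s=0\}, F)$ equal to $u$ on a neighborhood of $\mathrm{supp}\,T \cap (X \setminus \{s=0\})$. After shrinking $U$ to a smaller neighborhood $U'$ of $\mathrm{supp}\,T$ on which $u = \tilde{u}$, I glue $u$ on $U'$ with $\tilde{u}$ on $X \setminus \{s=0\}$ to obtain $u_{1}$ defined on $U_{1} := U' \cup (X \setminus \{s=0\})$, an open neighborhood of $K \cup \mathrm{supp}\,T$ containing $\mathrm{supp}\,T_{1}$. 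Condition (a) holds by construction.

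The main obstacle is the calibration of $c$: one must arrange that $K$ lies in the transition region of $\max_{\varepsilon}$ while $\mathrm{supp}\,T$ lies in the region where $\max_{\varepsilon} = \varphi$, so that $\mathrm{supp}\,T \subset \mathrm{supp}\,T_{1}$. When the ranges of $\varphi + \phi$ on $K$ and on $\mathrm{supp}\,T$ overlap---for instance when $K \cap \mathrm{supp}\,T \neq \emptyset$---no single $c$ achieves this directly, and one must first modify $\varphi$ near $K$ by a compactly supported smooth plurisuperharmonic bump to adjust the values of $\varphi + \phi$ on $K$ while preserving $\omega$-plurisubharmonicity (which is possible because $T$ provides the positivity needed to absorb the negative Hessian of the bump on $\mathrm{supp}\,T$).
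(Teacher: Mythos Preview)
Your construction of $u_{1}$ is essentially the paper's: both pass to the Stein manifold $X\setminus Y$ (with $Y=\{s=0\}$), use the exhaustion $\tilde{\varphi}=\varphi+\phi$, extend $u$ via the Stein lemmas, and glue back. You invoke the full inductive extension of Lemma~\ref{lemma:b} where a single application of Lemma~\ref{lemma:6} on a sublevel set $D_{\tilde{\varphi}}(t)\supset K$ already suffices, but this is harmless.

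Your construction of $\varphi_{1}$ differs and, as written, has a gap. On $X\setminus Y$ your formula reads $\varphi_{1}+\phi=\max_{\varepsilon}(\tilde{\varphi},c)$: you are taking the regularized maximum of $\tilde{\varphi}$ with a \emph{constant}. Hence away from $\mathrm{supp}\,T$ the only positivity of $T_{1}$ in the transition shell is the rank-one term proportional to $i\partial\tilde{\varphi}\wedge\overline{\partial}\tilde{\varphi}$, which vanishes at critical points of $\tilde{\varphi}$; and your claim that the shell $\{|\tilde{\varphi}-c|<\varepsilon\}$ can be arranged to contain $K$ while lying in a prescribed small neighborhood of $K$ is false in general, since it is a sublevel shell of $\tilde{\varphi}$ and not an arbitrary neighborhood. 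The ``obstacle'' you flag at the end and the plurisuperharmonic bump fix are symptoms of this misframing rather than a cure. (Your idea can in fact be salvaged by allowing $\varepsilon$ large and then arguing that $\tilde{\varphi}$, being pluriharmonic and non-constant on each component of $(X\setminus Y)\setminus\mathrm{supp}\,T$, has no open set of critical points, so the closure of the transition shell still lies in $\mathrm{supp}\,T_{1}$; but you do not carry this out.)

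The paper's route is more direct: apply Lemma~\ref{lemma:a} on $X\setminus Y$, i.e.\ take the maximum of $\tilde{\varphi}$ not with a constant but with a \emph{strictly} plurisubharmonic function $\tau$ chosen so that $\tau>\tilde{\varphi}$ on $\overline{D_{\tilde{\varphi}}(r)}\supset K$ and $\tau<\tilde{\varphi}$ outside $D_{\tilde{\varphi}}(t)$. Wherever $\tau$ wins one has $i\partial\overline{\partial}\tilde{\varphi}_{1}>0$ automatically, so $K\subset\mathrm{supp}\,i\partial\overline{\partial}\tilde{\varphi}_{1}$ with no calibration needed. Since $\tilde{\varphi}_{1}=\tilde{\varphi}$ outside $D_{\tilde{\varphi}}(t)$, the function $\varphi_{1}:=\tilde{\varphi}_{1}-\phi$ agrees with $\varphi$ near $Y$ and extends to a bounded almost plurisubharmonic function on $X$ satisfying (b) and (c).
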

\begin{proof} 
Put $Y = \{z \in X\, |\, s(z) = 0\}$.  
Then $X \setminus Y$ is a Stein manifold.  
Let $\|\cdot\|_{j}$ be a smooth hermitian metric of $L_{j}$ whose Chern curvature is $\omega_{j}$.  
Let $\tilde{\varphi} = \varphi -\sum_{j = 1}^{p}\frac{1}{2\pi}\log \|s_{j}\|^{2a_{j}}$ on $X \setminus Y$.  
Then $\tilde{\varphi}$ is an exhaustive plurisubharmonic function and $\mathrm{supp}\, i \partial \overline{\partial} \tilde{\varphi} = (X \setminus Y) \cap \mathrm{supp}\, (\omega + i\partial \overline{\partial} \varphi)$.  
Let $D_{\tilde{\varphi}}(r) = \{z \in X \setminus Y\, |\, \tilde{\varphi}(z) < r\}$.  
Take $s < t$ such that $K \subset D_{\tilde{\varphi}}(s) \subset \subset D_{\tilde{\varphi}}(t)$.  
There exists $v \in \Omega^{(0, q-1)}(D_{\tilde{\varphi}}(t), F)$ such that $\overline{\partial} v = 0$ and that $u = v$ on an open neighborhood of $D_{\tilde{\varphi}}(s) \cap \mathrm{supp}\, i \partial \overline{\partial} \tilde{\varphi}$ by Lemma~\ref{lemma:6}.  
As in the proof of Lemma~\ref{lemma:b}, 
we can glue $v$ and $u$ together, and we obtain an open neighborhood $U_{1} \subset X$ of $K \cup \mathrm{supp}\, (\omega + i \partial \overline{\partial} \varphi)$ and $u_{1} \in \Omega^{(0, q-1)}(U_{1}, F)$ which satisfy (a).  
By lemma~\ref{lemma:a}, there exists bounded from below, exhaustive plurisubharmonic function $\tilde{\varphi}_{1}$ such that $K \cup \mathrm{supp}\, i \partial\overline{\partial} \tilde{\varphi} \subset \mathrm{supp}\, i\partial \overline{\partial} \tilde{\varphi}_{1} \subset U_{1}$ 
and that $\tilde{\varphi}_{1} = \tilde{\varphi}$ on $X \setminus (Y \cup D_{\tilde{\varphi}}(t))$.  
Put $\varphi_{1} = \tilde{\varphi}_{1} + \sum_{j=1}^{p}\frac{1}{2\pi}\log \|s\|^{2a_{j}}$.  
Since $\varphi_{1} = \varphi$ on a neighborhood of $Y$, we can consider $\varphi_{1}$ as a bounded almost plurisubharmonic function on $X$.  
Then $\varphi_{1}$ is a function we are looking for.  
\end{proof}

Now we prove that the natural map 
\[
H^{q}(X, F) \to \underset{\mathrm{supp}\, (\omega + i\partial \overline{\partial} \varphi) \subset V}{\varinjlim} H^{q}(V, F) 
\]
is an isomorphism for $0 \leq q < n-2$ and is injective for $q = n-2$.  
\begin{proof}[proof of the case where $\varphi$ is bounded]
We first show that $H^{q}(X, F) \to \underset{\mathrm{supp}\, (\omega + i\partial \overline{\partial} \varphi) \subset V}\varinjlim H^{q}(V, F)$ is surjective for $0 \leq q \leq n-3$.  
Let $u \in \Omega^{(0, q)}(U, F)$ be a $\overline{\partial}$-closed $F$-valued form where $U$ is an open neighborhood of $\mathrm{supp}\, (\omega + i\partial \overline{\partial} \varphi)$.  
We can take compact sets $K_{l} \subset X$ and $s_{l, j} \in \Gamma(X, L_{j})$ ($1 \leq l \leq N$, $1 \leq j \leq p$) such that $\bigcup_{l = 1}^{N} K_{l} = X$ and that $K_{l} \cap \bigcup_{j=1}^{p} \{z \in X\, |\, s_{l, j}(z) = 0\} = \emptyset$ for any $l$.  
Put $\varphi_{0} = \varphi$, $u_{0} = u$.  
By using Lemma~\ref{lemma:8} repeatedly, there exist $U_{l} \subset X$, $u_{l} \in \Omega^{(0, q)}(U_{l}, F)$ ($1 \leq l \leq N$) and $\varphi_{l}$ ($1 \leq l \leq N-1$)
such that  
$\bigcup_{k=1}^{l}K_{k} \cup \mathrm{supp}\,(\omega + i\partial\overline{\partial}\varphi) \subset \mathrm{supp}\,(\omega + i\partial \overline{\partial} \varphi_{l}) \subset U_{l}$ and that $u_{l} = u_{l-1}$ on a neighborhood of $\mathrm{supp}\, (\omega + i \partial \overline{\partial} \varphi_{l-1})$.  
Hence $u_{N} \in \Omega^{(0, q)}(X, F)$ is $\overline{\partial}$-closed $F$-valued form such that $u_{N} = u$ on a neighborhood of $\mathrm{supp}\, (\omega + i \partial \overline{\partial} \varphi)$. 
This proves the surjectivity.   

Next, we show that $H^{q}(X, F) \to \underset{\mathrm{supp}\, (\omega + i\partial \overline{\partial} \varphi) \subset V}\varinjlim H^{q}(V, F)$ is injective for $0 \leq q \leq n-2$.  
Let $\alpha \in \Omega^{(0, q)}(X, F)$ such that $\overline{\partial} \alpha = 0$.  
Suppose that there exists an open neighborhood $U \subset X$ of $\mathrm{supp}\, (\omega + i\partial \overline{\partial} \varphi)$ and $v \in \Omega^{(0, q-1)}(U, F)$ such that $\alpha = \overline{\partial} v$ on $U$.  
Let $\chi \in C^{\infty}(X)$ be a function such that $\mathrm{supp}\, \chi \subset U$ and that $\chi = 1$ on a neighborhood of $\mathrm{supp}\, (\omega + i\partial \overline{\partial} \varphi)$.  
Then $\alpha - \overline{\partial} (\chi v)$ is $\overline{\partial}$-closed $F$-valued form which vanishes on a neighborhood of $\mathrm{supp}\, (\omega + i\partial \overline{\partial} \varphi)$.   
Take $K_{1}, \ldots, K_{N}$ as in the proof of the surjectivity.  
As in the proof of Lemma~\ref{lemma:6}, there exists 
$F$-valued $(0, q-1)$-form $v'_{1}$ which is defined in an open neighborhood of $K_{1}$ such that $\overline{\partial} v'_{1} = \alpha - \overline{\partial}(\chi v)$ and that $v'_{1} = 0$ on a neighborhood of $\mathrm{supp}\, (\omega + i\partial \overline{\partial} \varphi)$.  
By the trivial extension, we may assume that $v'_{1}$ is defined on a neighborhood $U_{1}$ of $K_{1}  \cup \mathrm{supp}\, (\omega + i\partial \overline{\partial} \varphi)$.  
Define $v_{1} = \chi v + v'_{1}$ 
on $U_{1}$.  
We have that $\overline{\partial}v_{1} = \alpha$ and that $v_{1} = v$ on a neighborhood of $\mathrm{supp}\, (\omega + i\partial \overline{\partial} \varphi)$.  
As in Lemma~\ref{lemma:8}, there exists a bounded function $\varphi_{1}$ such that $\omega + i\partial \overline{\partial} \varphi_{1} \geq 0$ and that $K_{1} \cup \mathrm{supp}\, (\omega + i\partial \overline{\partial} \varphi) \subset \mathrm{supp}\, (\omega + i\partial \overline{\partial} \varphi_{1}) \subset U_{1}$.  
If we replace $K_{1}, v, U, \varphi$ by $K_{2}, v_{1}, U_{1}, \varphi_{1}$ respectively, we obtain $v_{2}, U_{2}, \varphi_{2}$ which satisfy the suitable conditions.  
By repeating this process, we obtain $v_{N} \in \Omega^{(0, q-1)}(X, F)$ such that $\overline{\partial} v_{N} = \alpha$.  
\end{proof}
\begin{proof}[proof of the case where $\varphi$ is unbounded]
Define $\varphi_{c} = \max\{\varphi, c\}$ for $c \in \mathbb{R}$ and 
$T_{c} = \omega + i \partial \overline{\partial} \varphi_{c}$.  
Then $\varphi_{c}$ is bounded and $T_{c} \geq 0$ since $\omega > 0$.  
It is easy to see that $\mathrm{supp}\, T = \mathrm{supp}\, (\omega + i \partial \overline{\partial} \varphi) \subset \mathrm{supp}\, T_{c}$.  
Let $U$ be any open neighborhood of $\mathrm{supp}\, T$, 
We show that $\mathrm{supp}\, T_{c} \subset U$ for sufficiently small $c$.  
Assume that there exist points $x_{k} \subset X \setminus U$ such that $x_{k} \in \mathrm{supp}\, T_{-k}$ for any $k \in \mathbb{N}$. 
Let $\{x_{k(j)}\}_{j \in \mathbb{N}}$ be a convergent subsequence and $x = \lim_{j \to \infty}x_{k(j)} \in X \setminus U$.  
Then almost plurisubharmonic function $\varphi$ is unbounded on a neighborhood of $x$, and is not continuous at $x$.  
We have that $x \in \mathrm{supp}\, T$, which gives a contradiction.    
Now we can reduce the proof to the case where $\varphi$ is bounded.  
\end{proof}

\section{Proof of Main results}
The proof of Theorem~\ref{theorem:1} is similar to that of Theorem~\ref{theorem:2}.  
Hence we only prove Theorem~\ref{theorem:2} by induction on $m$.  
The case $m=1$ holds by the arguments in Section~5.  
Assume now that $m \geq 2$ and that the case $m-1$ has already been proved.  
We first note that the cohomology class of $T_{j} + T_{k}$ is in $\mathcal{K}_{NS}$ and 
\[
\mathrm{supp}\, T_{j} \cup \mathrm{supp}\, T_{k} = \mathrm{supp}\, (T_{j} + T_{k}) 
\]
for any $1 \leq j, k \leq m$.  
Put 
$A = \bigcap_{j=1}^{m-1}\mathrm{supp}\, T_{j}$ and 
$B = \mathrm{supp}\, T_{m}$.   
Then Mayer-Vietoris exact sequence yields the commutative diagram 
\[
\xymatrix{
\cdots \ar[r]  & H^{q}(X, F) \ar[r] \ar[d]&
H^{q}(X, F) \oplus H^{q}(X, F)
\ar[r] \ar[d]&
H^{q}(X, F) 
\ar[r] \ar[d]&
\cdots \\
\cdots \ar[r] &  \underset{\substack{A \subset V_{1}\\ B \subset V_{2}}}{\varinjlim}H^{q}(V_{1}\cup V_{2}, F) \ar[r] & 
\underset{A \subset V_{1}}{\varinjlim}H^{q}(V_{1}, F) \oplus \underset{B \subset V_{2}}{\varinjlim}H^{q}(V_{2}, F)
\ar[r] &
\underset{\substack{A \subset V_{1}\\ B \subset V_{2}}}{\varinjlim}H^{q}(V_{1}\cap V_{2}, F)
\ar[r] &
\cdots
}
\]
where the rows are exact sequences.  
We have that 
\[
\underset{\substack{A \subset V_{1}\\ B \subset V_{2}}}{\varinjlim}H^{q}(V_{1}\cup V_{2}, F)  \simeq 
\underset{\bigcap_{j=1}^{m-1}(\mathrm{supp}\, (T_{j} + T_{m})  \subset V}{\varinjlim} H^{q}(V, F) \]
and 
\[
\underset{\substack{A \subset V_{1}\\ B \subset V_{2}}}{\varinjlim}H^{q}(V_{1}\cap V_{2}, F)
\simeq 
\underset{\bigcap_{j=1}^{m}\mathrm{supp}\, T_{j} \subset V}{\varinjlim} H^{q}(V, F).  
\]
Then we complete the proof by 
the induction hypothesis and a diagram-chasing argument.  
\qed 

Let $X = {\bf P}^{n}$ be the complex projective space of dimension $n \geq 3$.  
Then any non-zero closed positive current of type $(1, 1)$ belongs to $\mathcal{K}_{NS}$ 
and we can apply Theorem~\ref{theorem:2} to this case.

\vspace{5mm}

\par\noindent{\scshape \small
Department of Mathematics, \\
Ochanomizu University,  \\
2-1-1 Otsuka, Bunkyo-ku, Tokyo (Japan) }
\par\noindent{\ttfamily chiba.yusaku@ocha.ac.jp}
\end{document}